\documentclass[12pt]{article}
\usepackage{amsmath,amsfonts,amsthm,amssymb,mathtools,enumerate}
\usepackage{mathrsfs,graphicx,color,latexsym, tikz, calc}
\usepackage{hyperref} 
\usetikzlibrary{shadows}
\usetikzlibrary{patterns,arrows,decorations.pathreplacing}
\textwidth 160mm \textheight240mm \oddsidemargin=-0cm
\evensidemargin=0cm \topmargin=-1.5cm

\newtheorem{theorem}{Theorem}
\newtheorem{lemma}{Lemma}

\newtheorem{conjecture}{Conjecture}
\newtheorem{corollary}{Corollary}

\title{\bf \Large Spectral radius of graphs forbidden $C_7$ or $C_6^{\triangle}$}

\author{
{\small Junying Lu$^{a}$,\ \ Lu Lu$^{b,}$\footnote{Corresponding author.
\newline{ \hspace*{5mm}Email addresses:} lujunying\_math@163.com (J. Lu), lulugdmath@163.com (L. Lu), ytli0921@hnu.edu.cn (Y. Li).},\ \ Yongtao Li$^{c}$}\\[2mm]
\footnotesize $^a$ Department of Mathematics, Nanjing University, Nanjing 210093, China\\
\footnotesize $^b$ School of Mathematics and Statistics, Central South University, Changsha, Hunan, 410083, China\\
\footnotesize $^c$ School of Mathematics, Hunan University, Changsha, Hunan, 410082, China \\}

\date{ }

\begin{document}
\maketitle
\begin{abstract}
Let $C_k^{\triangle}$ be the graph obtained from a cycle $C_{k}$ by adding a new vertex connecting two adjacent vertices in $C_{k}$. In this note, we obtain the graph maximizing the spectral radius among all graphs with size $m$ and containing no subgraph isomorphic to $C_6^{\triangle}$. As a byproduct, we will show that if the spectral radius $\lambda(G)\ge1+\sqrt{m-2}$, then $G$ must contain all the cycles $C_i$ for $3\le i\le 7$ unless $G\cong K_3\nabla \left(\frac{m-3}{3}K_1\right)$.\\[1mm]

\noindent {\bf AMS classification:} 05C50\\[1mm]
\noindent {\bf Keywords:} Spectral radius; Spectral Tur\'{a}n number; Cycles
\end{abstract}

\baselineskip=0.202in

\section{Introduction}

For a simple graph $G = (V, E)$, we use $n= |V|$ and $m = |E|$ to denote the order and the size of $G$, respectively. Let $A(G)$ be the adjacency matrix of a graph $G$. The largest modulus of all eigenvalues of $A(G)$ is the spectral radius of $G$ and denoted by $\lambda(G)$. Denote the join of simple graphs $G$ and $H$ by $G\nabla H$.

Let $\mathcal{F}$ be a set of graphs, we say that $G$ is $\mathcal{F}$-free if it does not contain any element in $\mathcal{F}$ as a subgraph. Let $\mathcal{G}(m, \mathcal{F})$ denote the set of $\mathcal{F}$-free graphs with $m$ edges having no isolated vertices. When the forbidden set $\mathcal{F}$ is a singleton, say $\{F \}$, then we write $\mathcal{G}(m, F )$ for $\mathcal{G}(m, \mathcal{F})$. Brualdi and Hoffman \cite{Brualdi} posed a spectral Tur{\'a}n type problem: What is the maximal spectral radius of an $\mathcal{F}$-free graph of  given size $m$? Each of the corresponding extremal graphs is called the maximal graph. 
These extremal spectral graph problems attracted wide attention recently; 
see, for example, \cite{Niki2002cpc,Niki2009jctb} for 
$K_{r+1}$-free graphs, \cite{Niki2009laa} for $C_4$-free graphs, 
\cite{LP2022,LLH2022,Wang2022DM,ZS2021} for triangle-free non-bipartite graphs. 
Moreover, Zhai, Lin and Shu \cite{ZLS2021} studied this problem on $C_5$-free or $C_6$-free graphs with given size $m$, and left the following conjecture.

\begin{conjecture}[\cite{ZLS2021}]\label{con-1}
Let $k$ be a fixed positive integer and $G$ be a graph of sufficiently large size $m$ without
isolated vertices. If $\lambda(G)\ge \frac{k-1+\sqrt{4m-k^2+1}}{2 }$, then $G$ contains a cycle of length $t$ for every $t \le 2k + 2$, unless $G\cong K_k\nabla (\frac{m}{k}-\frac{k-1}{2})K_1$.
\end{conjecture}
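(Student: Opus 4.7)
Note first that $\lambda^{\ast} := \frac{k-1+\sqrt{4m-k^2+1}}{2}$ is the larger root of $\lambda^2-(k-1)\lambda-(m-\binom{k}{2})=0$, and an equitable-partition calculation identifies it with the spectral radius of $H := K_k\nabla\bigl(\frac{m}{k}-\frac{k-1}{2}\bigr)K_1$. A direct check shows that $H$ itself contains every cycle $C_\ell$ with $3\le \ell\le 2k$ but contains neither $C_{2k+1}$ nor $C_{2k+2}$. This cleanly splits the conjecture into two regimes: \emph{short cycles} $t\in\{3,\ldots,2k\}$, for which $G\neq H$ is automatic (since $H$ already contains $C_t$), so it suffices to show $\lambda(G)<\lambda^{\ast}$ for every $C_t$-free $G$; and \emph{long cycles} $t\in\{2k+1,2k+2\}$, where $H$ is itself $C_t$-free and one must additionally establish the uniqueness of $H$ as the spectral maximizer among $C_t$-free graphs.

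For the short-cycle regime the plan is to invoke and extend existing spectral Tur\'an bounds: $\lambda(G)\le\sqrt{m}$ for $C_3$-free \cite{Niki2002cpc}, $\lambda(G)\le (1+\sqrt{4m-3})/2$ for $C_4$-free \cite{Niki2009laa}, and the Zhai--Lin--Shu estimates for $C_5,C_6$ \cite{ZLS2021}. Since $\lambda^{\ast} = \sqrt{m}+\tfrac{k-1}{2}+o(1)$ for large $m$, each of these bounds is strictly below $\lambda^{\ast}$ whenever $k\ge 2$, which disposes of $t\le 6$. The same scheme, applied to the (still to be pinned down) size-extremal graph for each $C_t$-free class with $7\le t\le 2k$, would close out this regime; the expected template is that for $t$ in this intermediate range the extremal graph is of ``book'' or ``short-clique join'' type whose spectral radius again admits an explicit closed form lying strictly below $\lambda^{\ast}$.

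The long-cycle regime is the substantive new contribution. I would work with the Perron eigenvector $x$, normalized so that $x_{u^{\ast}}=\max_v x_v=1$. Squaring the eigenvalue equation gives
\[
\lambda^2 \;=\; \sum_{v\sim u^{\ast}}\sum_{w\sim v} x_w,
\]
and the idea is to separate the at most $k-1$ ``heavy'' neighbors of $u^{\ast}$ (candidates for the remaining clique vertices in $H$) from the ``light'' neighbors, then bound the remaining double sum by $m-\binom{k}{2}$ so as to get $\lambda^2 \le (k-1)\lambda + (m-\binom{k}{2})$, i.e.\ $\lambda\le \lambda^{\ast}$. The heart of the matter is a path-closing argument: any two neighbors $v,v'$ of $u^{\ast}$ joined by a path of length $2k-1$ or $2k$ in $G-u^{\ast}$ would close through $u^{\ast}$ into a $C_{2k+1}$ or a $C_{2k+2}$ respectively. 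Ruling out such configurations under $C_{2k+1}$- or $C_{2k+2}$-freeness demands a detailed BFS analysis of $N(u^{\ast})\cup N_2(u^{\ast})$ together with Bondy--Simonovits-type density bounds on $k$-paths. This is precisely the step whose combinatorial complexity grows sharply with $k$ and forms the principal obstacle: the present paper carries out the enumeration for $k=3$ and $t=7$, but a uniform argument for general $k$ will likely require a structural lemma asserting that the second BFS layer of a near-extremal $G$ is very close to that of $H$. Once $\lambda(G)\le \lambda^{\ast}$ is secured, the equality case follows routinely from $A(G)x=\lambda x$: equal Perron entries force the $k$ dominating vertices to form a clique and the remaining vertices to be independent with identical neighborhoods, giving $G\cong H$.
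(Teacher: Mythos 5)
First, a point of scope: the statement you are proving is Conjecture~\ref{con-1}, which the paper does \emph{not} prove --- it is quoted from \cite{ZLS2021} as an open problem, and the paper explicitly notes that it remains open for $k\ge 3$. The paper's actual contribution (Theorem~\ref{thm-1} and Corollary~\ref{cor-1}) settles only the fragment corresponding to $k=3$ and cycles up to length $7$ (via the stronger $C_6^{\triangle}$-free result), not the full statement for all $k$ and all $t\le 2k+2$. So there is no ``paper's own proof'' to match yours against; any complete argument here would be a new theorem.

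Second, and more importantly, your proposal is not a proof but a programme, and both of its regimes contain genuine gaps that you yourself flag. In the short-cycle regime, the cases $t\le 6$ do follow from the cited bounds (modulo checking equality cases, e.g.\ for $k=2$ and $t=4$ Nikiforov's bound $\lambda\le(1+\sqrt{4m-3})/2$ coincides with $\lambda^{\ast}$, so you must also rule out the $C_4$-free equality graph), but for $7\le t\le 2k$ you appeal to ``the expected template'' for the size-spectral-extremal $C_t$-free graph; no such classification is known, and this is itself a hard open problem of the same difficulty as the conjecture. In the long-cycle regime, the path-closing argument is exactly where all the work lies: the paper's Section~\ref{sec-3} needs the full machinery of the $\eta(H)$ parameter, the $P_6$-free structure theorem (Lemma~\ref{lem-x-6}), and a lengthy case enumeration ($F_1$ through $F_{14}$) just to handle $k=3$, and you concede that a uniform argument for general $k$ ``will likely require a structural lemma'' you do not supply. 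A sketch whose central steps are labelled as obstacles to be overcome does not establish the statement; as written, the proposal proves nothing beyond what is already in \cite{Niki2002cpc,Niki2009laa,ZLS2021}.
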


An equivalent version of Zhai--Lin--Shu's conjecture can be stated as 

\begin{conjecture}[Zhai--Lin--Shu] \label{Conj-ZLS}
Let $k\ge 2$ and $G$ be a graph of sufficiently large size $m$ 
without isolated vertices. 
If $G$ is  $C_{2k+1}$-free or $C_{2k+2}$-free, then 
\[  \lambda (G)\le \frac{k-1 +\sqrt{4m -k^2+1}}{2}, \]
equality holds if and only if  
$G\cong K_k \nabla (\frac{m}{k} - \frac{k-1}{2})K_1$. 
\end{conjecture}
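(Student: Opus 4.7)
The plan is to fix $k\ge 2$, let $G\in \mathcal{G}(m,C_{2k+1})$ or $\mathcal{G}(m,C_{2k+2})$ be extremal for the spectral radius, and work directly with the Perron eigenvector of $G$. By Perron--Frobenius I pass to a connected component of maximum spectral radius and assume $G$ is connected. Let $x$ be the positive Perron eigenvector with eigenvalue $\lambda=\lambda(G)$, scaled so that $x_u=\max_v x_v=1$ at some vertex $u$. A direct rearrangement shows the conjectured bound is equivalent to $\lambda^2-(k-1)\lambda\le m-\binom{k}{2}$, and this weighted edge-count inequality is what I would aim to prove.

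\textbf{Main estimate via the eigenvector.} The identity $\lambda^2 x_u=\sum_w |N(u)\cap N(w)|\,x_w$, together with $(k-1)\lambda x_u=(k-1)\sum_{v\sim u}x_v$ and $x_u=1$, gives
\[
\lambda^2-(k-1)\lambda = d(u) + \sum_{w\in N(u)}\bigl(d_{N(u)}(w)-(k-1)\bigr)x_w + \sum_{w\notin N[u]}|N(u)\cap N(w)|\,x_w.
\]
The idea is to bound the right side by grouping contributions: for $w\in N(u)$ with $d_{N(u)}(w)\ge k-1$ use $x_w\le 1$; for $w\in N(u)$ with $d_{N(u)}(w)<k-1$ the term is non-positive and can be discarded; for $w\notin N[u]$ again use $x_w\le 1$. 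This reduces the problem to a purely combinatorial inequality
\[
d(u) + \sum_{\substack{w\in N(u)\\ d_{N(u)}(w)\ge k-1}}\bigl(d_{N(u)}(w)-(k-1)\bigr) + \sum_{w\notin N[u]} |N(u)\cap N(w)| \le m - \binom{k}{2},
\]
to be established from $C_{2k+1}$- or $C_{2k+2}$-freeness. The extremal graph $K_k\nabla \bigl(\tfrac{m}{k}-\tfrac{k-1}{2}\bigr)K_1$ with $u$ chosen inside the $K_k$ achieves equality: then $d(u)=k-1+n_1$, each contributing $w\in N(u)$ yields $d_{N(u)}(w)-(k-1)\in\{0,n_1\}$, and no length-$2$ walks from $u$ reach $V\setminus N[u]$.

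\textbf{Combinatorial input and the two parities.} To bound the combinatorial quantity I would decompose $V$ into BFS layers $L_0=\{u\}$, $L_1=N(u)$, $L_2,L_3,\ldots$ and split edges as within-layer or between-layer. The $C_{2k+1}$-freeness (resp.\ $C_{2k+2}$-freeness) translates, via closed walks through $u$, into restrictions on how edges between $L_j$ and $L_{j+1}$ interact with edges inside $L_j$ for $1\le j\le k$. The expected outcome is a bound of schematic form $2e(G[N(u)])+P_2(u,V\setminus N[u])\le m-\binom{k}{2}+(k-2)d(u)$, which when substituted into the main estimate yields the target. The odd- and even-cycle cases differ in the parity of the returning path through $u$: odd-length forbidden cycles close at even BFS distance and are handled by a bipartite-style parity argument on odd-indexed layers, while even-length forbidden cycles close at odd distance and force the dual constraints. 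For the uniqueness clause, tracing back the inequalities $x_w\le 1$ forces all contributing $w$ to satisfy $x_w=1$ and, by Perron uniqueness, $N(w)=N(u)$; together with the cycle-exclusion identification this pins down $G\cong K_k\nabla\bigl(\tfrac{m}{k}-\tfrac{k-1}{2}\bigr)K_1$.

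\textbf{Main obstacle.} The hard step is the layered combinatorial inequality for general $k$. For $k=2$ (Zhai--Lin--Shu) and $k=3$ (the present paper, via the sharper $C_6^{\triangle}$-free hypothesis to control the $L_1$-structure directly) the BFS analysis involves only shallow layers and a direct case analysis suffices; for $k\ge 4$ the constraints cascade across layers $L_2,\ldots,L_k$, and I expect that either an induction on $k$ (leveraging the fact that high spectral radius already forces cycles of many lengths, in the spirit of Nikiforov) or a sharper weighted bound replacing the pointwise estimate $x_w\le 1$ with an averaged control over each layer will be needed. A secondary subtlety is that the Perron-maximal vertex $u$ need not a priori lie inside the ``spine'' $K_k$ of the extremal graph, so a preliminary local-modification or degree-boosting step may be required to normalize $u$ into the correct position before running the main estimate.
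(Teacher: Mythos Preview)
The statement you are attempting is labelled a \emph{Conjecture} in the paper, and the paper does not prove it: it explicitly says the case $k\ge 3$ remains open and establishes only the $C_7$-free half of $k=3$, as a corollary of the sharper $C_6^{\triangle}$-free result (Theorem~\ref{thm-1}). So there is no ``paper's own proof'' to match; at best one can compare your sketch to the $k=3$ argument actually carried out.

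Your starting identity $\lambda^2-(k-1)\lambda=d(u)+\sum_{w\in N(u)}(d_{N(u)}(w)-(k-1))x_w+\sum_{w\notin N[u]}|N(u)\cap N(w)|x_w$ is exactly the engine the paper uses for $k=3$ (their equations \eqref{eq1}--\eqref{eq3}). The genuine divergence, and the real gap in your plan, is what you do next. You propose to \emph{discard} the negative summands coming from $w\in N(u)$ with $d_{N(u)}(w)<k-1$ and then prove a purely combinatorial inequality. The paper does the opposite: it \emph{keeps} every negative term, packages them component-by-component into the invariant $\eta(H)=\sum_{u\in V(H)}(d_H(u)-2)x_u/x_{u^*}-e(H)$, and then spends Lemmas~\ref{lem-1}--\ref{lem-2} showing $\eta(H)$ is strictly negative (typically $\le -3$) via the $P_{6}$-freeness of $G^*[N(u^*)]$. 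Those negative pieces are not slack to be thrown away; they are the leverage that forces $N_0(u^*)=\emptyset$, $e(W)=0$, and ultimately the exact structure $G^*[N(u^*)]=K_2\nabla (h-2)K_1$. If you discard them, your reduced combinatorial inequality is no longer sharp enough to recover uniqueness, and you have also not shown it holds---you acknowledge this yourself as the ``main obstacle.''

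Two further points. First, the paper does not use a BFS to depth $k$; it stays at depth two (the sets $N(u^*)$ and $W$) and instead exploits that $G^*[N(u^*)]$ is $P_{2k}$-free, invoking the path-free edge bound (Lemma~\ref{lem-x-5}) and the structural $P_6$-free characterisation (Lemma~\ref{lem-x-6}). Your layered scheme out to $L_k$ is a different and unproven programme. Second, the paper handles your ``secondary subtlety'' about the location of $u^*$ through Lemma~\ref{lem-x-3} (no cut vertex outside $u^*$), which feeds directly into the component analysis; your ``local-modification'' suggestion is not needed, but also not fleshed out. In short: the eigenvector framework is right, but the combinatorial core is neither proved by you nor by the paper for general $k$, and for $k=3$ the paper's mechanism is the $\eta(H)$ bookkeeping, not the discard-and-bound step you propose.
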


In 2021, Zhai, Lin and Shu \cite{ZLS2021} proved the conjecture in the case 
$k=2$. 
Note in this case that the extremal graph $K_2\nabla \frac{m-1}{2}K_1$ is 
 well-defined only in the case of odd $m$. 
Later, Min, Lou and Huang \cite{MLH2021} 
proved the case of $k=2$ and even $m$. 
Namely, by adding a pendant edge to a maximum-degree vertex of 
$K_2\nabla \frac{m-2}{2}K_1$.  
Furthermore, stability-type results involving the case $k=2$ were 
proved by Li, Sun and Wei \cite{LSW2022} recently.  
It is worth noting that Conjecture \ref{Conj-ZLS} remains open for the case $k\ge 3$. 
In this paper, we will show the case $k=3$ for $C_7$-free graphs.

In fact, we will prove a more slightly general result.  
Let $C_t^{\triangle}$ denote the graph on $t+1$ vertices obtained from 
$C_t$ and $C_3$ by identifying an edge. In other words, 
$C_t^{\triangle}$ can be obtained from $C_t$ by adding a new vertex and joining 
this vertex to two adjacent vertices of $C_t$. 
Clearly, we can see that both $C_t$ and $C_{t+1}$ are subgraphs 
of $C_t^{\triangle}$. 
It was proved by Zhai, Lin and Shu in \cite{ZLS2021} that 
the complete bipartite graphs attain the maximum spectral radius 
when both the substructures  $C_3^{\triangle}$ and $C_4^{\triangle}$ are forbidden. 
 In \cite{Nik2021}, Nikiforov showed that if $G$ is a graph with $m$ edges and $\lambda^2(G)\ge m$, then the maximum number of triangles with a common edge in $G$ is greater than $\frac{1}{12}\sqrt[4]{m}$, unless $G$ is a complete bipartite graph with possibly some isolated vertices. As a conclusion, the complete bipartite graphs attain the maximum spectral radius when we only forbid $C_3^{\triangle}$. 
 Very recently, Li, Sun and Wei \cite{LSW2022} 
 determined that $K_2\nabla \frac{m-1}{2}K_1$ is the unique extremal graph for $C_4^{\triangle}$-free or $C_5^{\triangle}$-free graphs when the size $m$ is odd. 
 Soon after, Fang, You and Huang \cite{FYH2022} further determined 
 the extremal graph for even $m$. 
The following conjecture was recently proposed in \cite{LLH2022}. 

\begin{conjecture}[\cite{LLH2022}] \label{Conj-L}
Let $k\ge 2$ and $G$ be a graph of sufficiently large size $m$ 
without isolated vertices. 
If $G$ is  $C_{2k}^{\triangle}$-free or $C_{2k+1}^{\triangle}$-free, then 
\[  \lambda (G)\le \frac{k-1 +\sqrt{4m -k^2+1}}{2}, \]
equality holds if and only if  
$G\cong K_k \nabla (\frac{m}{k} - \frac{k-1}{2})K_1$. 
\end{conjecture}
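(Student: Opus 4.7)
The plan is to adapt the eigenvector-plus-extremal-reduction template that settles the $k=2$ case of the conjecture in \cite{ZLS2021,MLH2021,LSW2022,FYH2022}, by inductively extracting a $K_{k}$-``core'' from any near-extremal graph. Fix $k\ge 2$ and let $G$ be a $C_{2k}^{\triangle}$-free (resp.\ $C_{2k+1}^{\triangle}$-free) graph of sufficiently large size $m$ without isolated vertices, maximising $\lambda=\lambda(G)$. Since $H:=K_{k}\nabla tK_{1}$ with $t=\frac{m}{k}-\frac{k-1}{2}$ is itself both $C_{2k}^{\triangle}$-free and $C_{2k+1}^{\triangle}$-free and satisfies $\lambda(H)^{2}-(k-1)\lambda(H)=m-\binom{k}{2}$, we have $\lambda^{2}-(k-1)\lambda\ge m-\binom{k}{2}$; the task is to promote this to equality and to force $G\cong H$.

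Let $\mathbf{x}$ denote the Perron vector of $G$, normalised by $x_{u^{*}}=\max_{v}x_{v}=1$. First I would carry out the standard extremal reductions analogous to those in \cite{ZLS2021,LSW2022,FYH2022}: $G$ is connected; no Kelmans-type local edge-switch that preserves the forbidden subgraph strictly increases $\lambda$; the maximum degree of $G$ is $\Omega(\lambda)$; and any pair of vertices whose $\mathbf{x}$-values are close to $1$ are adjacent. Define the core $K:=\{v:x_{v}\ge 1-\varepsilon\}$ for a suitable threshold $\varepsilon=\varepsilon(k,m)=o(1)$; these reductions concentrate the edges of $G$ near $K$.

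The crucial structural input is a cycle-finding lemma: \emph{in a $C_{2k}^{\triangle}$-free graph, any copy of $K_{k+1}$ whose vertices share a sufficiently large common neighbourhood already contains a $C_{2k}^{\triangle}$} (and analogously for $C_{2k+1}^{\triangle}$). I would prove this by picking an edge $uv$ of the $K_{k+1}$ and a third hub $w$ (to complete the triangle $uvw$), and then constructing a path of length $2k-1$ from $u$ to $v$ avoiding $w$ by alternating between the remaining $k-2$ hubs of the $K_{k+1}$ and vertices of the common neighbourhood; an Erd\H{o}s--Gallai-type argument on the common neighbourhood handles the case where the hub budget is too small. Combined with a Perron-vector lower bound on the common neighbourhood of any family of high-$\mathbf{x}$ vertices, this yields $|K|\le k$ and forces every vertex outside $K$ to have degree at most $k$ with all its neighbours inside $K$.

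With this structural description in hand, a careful accounting of the eigenequation at $u^{*}$, splitting neighbours of $u^{*}$ according to membership in $K$, yields $\lambda^{2}-(k-1)\lambda\le m-\binom{k}{2}$; equality throughout forces $G\cong K_{k}\nabla tK_{1}$. The $C_{2k+1}^{\triangle}$-free case runs identically, using a path of length $2k$ in place of $2k-1$. The hardest step is the parity-sensitive cycle-finding lemma, which requires producing a path of prescribed parity between prescribed endpoints inside the $K_{k+1}$-plus-common-neighbourhood structure while avoiding a prescribed third vertex. Parity control here is precisely what keeps Conjecture~\ref{Conj-ZLS} open for $k\ge 3$, and since $C_{2k}^{\triangle}$-freeness is strictly weaker than $C_{2k}$-freeness (the former cycle contains the latter), Conjecture~\ref{Conj-L} is strictly stronger than Conjecture~\ref{Conj-ZLS}; no purely soft reduction can therefore be expected. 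A plausible route combines a stability form of the Bondy--Simonovits theorem on cycle lengths with an iterative peeling of low-eigenvector vertices, mirroring what the present paper achieves for $k=3$.
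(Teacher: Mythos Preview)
The statement you are ``proving'' is Conjecture~\ref{Conj-L}, which the paper does \emph{not} prove in general; it establishes only the single case $k=3$ for $C_{6}^{\triangle}$-free graphs (Theorem~\ref{thm-1}). There is therefore no proof in the paper to compare your proposal against, and any complete argument for arbitrary $k$ would go well beyond what the paper claims.

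Your proposal is not a proof but a programme, and you correctly identify its gap yourself: the ``parity-sensitive cycle-finding lemma'' asserting that a $K_{k+1}$ with large common neighbourhood forces a $C_{2k}^{\triangle}$ (or $C_{2k+1}^{\triangle}$) is the entire content of the conjecture, and you offer no argument for it beyond naming Erd\H{o}s--Gallai and Bondy--Simonovits stability as ``plausible routes''. Everything else in the outline---connectedness, Kelmans-type switches, eigenvector concentration, the core set $K$---is scaffolding that is well understood from the cited $k=2$ papers and does not touch the real difficulty. As you note, Conjecture~\ref{Conj-L} is strictly stronger than the still-open Conjecture~\ref{Conj-ZLS}, so an argument at this level of detail cannot be expected to settle it.

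It is also worth noting that the paper's actual proof for $k=3$ follows a rather different template from the one you sketch. There is no eigenvector-threshold core $K$ and no Bondy--Simonovits input. Instead, the paper analyses the induced subgraph $G^{*}[N(u^{*})]$ directly: since $G^{*}$ is $C_{6}^{\triangle}$-free, $G^{*}[N(u^{*})]$ is $P_{6}$-free, and the paper exploits the van~'t~Hof--Paulusma structural characterisation of $P_{6}$-free graphs (Lemma~\ref{lem-x-6}) together with a component-by-component potential function $\eta(H)=\sum_{u\in V(H)}(d_{H}(u)-2)x_{u}/x_{u^{*}}-e(H)$ to force $G^{*}[N(u^{*})]\cong K_{2}\nabla (h-2)K_{1}$ and $W=\emptyset$. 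If you want to mirror the paper's method for general $k$, the analogue would require understanding $P_{2k}$-free graphs in the neighbourhood of $u^{*}$, which is a different (and also hard) structural problem from the cycle-finding lemma you propose.
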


Motivated by the previous works \cite{FYH2022,LSW2022,MLH2021,ZLS2021}, 
 we will verify in this paper that Conjecture \ref{Conj-L} holds 
 for the case $k=3$ and 
 we characterize the unique graph with the maximum spectral radius among $\mathcal{G}(m,C_6^{\triangle})$.

\begin{theorem}\label{thm-1}
Let $G$ be a graph in $\mathcal{G}(m,C_6^{\triangle})$. If $m\ge 38$, then 
\[ \lambda(G)\le 1+\sqrt{m-2} . \]
Moreover, the equality holds if and only if $G\cong K_3\nabla \frac{m-3}{3}K_1$.
\end{theorem}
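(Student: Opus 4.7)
Take $G \in \mathcal{G}(m, C_6^{\triangle})$ with maximum spectral radius; let $\mathbf{x}$ be its Perron eigenvector and normalise so that $x(u^*) = \max_v x(v) = 1$. Since $K_3 \nabla \frac{m-3}{3}K_1$ is readily checked to be $C_6^{\triangle}$-free with spectral radius exactly $1+\sqrt{m-2}$ when $3 \mid m$, extremality gives $\lambda := \lambda(G) \geq 1+\sqrt{m-2}$ in that case; in general the claim $\lambda \leq 1+\sqrt{m-2}$ is equivalent to $\lambda^2 - 2\lambda \leq m-3$, which is what I will prove.

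Next I partition $V(G) = \{u^*\} \cup N \cup M$ with $N = N(u^*)$ and $M = V(G) \setminus (\{u^*\} \cup N)$, and use the walk-counting identity
\[
\lambda^2 x(u^*) \;=\; d(u^*)\, x(u^*) + \sum_{w \neq u^*} |N \cap N(w)|\, x(w),
\]
together with $\sum_{w \neq u^*} |N \cap N(w)| = 2e(N) + e(N, M)$ and $m = d(u^*) + e(N) + e(N,M) + e(M)$. Replacing each $x(w)$ by its upper bound $1$ gives the crude estimate $\lambda^2 \leq m + e(N) - e(M)$, which is short of the goal by the extra term $e(N) - e(M)$. To sharpen it to $\lambda^2 - 2\lambda \leq m-3$, I will subtract the identity $2\lambda x(u^*) = 2 \sum_{v \in N} x(v)$, which refunds $2(1 - x(v))$ for each $v \in N$; the refund must absorb the $e(N)$ overcount, and this is what will force the structure of $N$.

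The technical heart is a structural lemma derived from $C_6^{\triangle}$-freeness, which I target to read as: $N$ decomposes as $\{u_2, u_3\} \cup B$ with $u_2 u_3 \in E(G)$, $B$ independent, every $v \in B$ satisfying $N(v) \cap N \subseteq \{u_2, u_3\}$, and moreover $M$ independent with each $w \in M$ having $N(w) \subseteq \{u_2, u_3\}$. The principal tool is the implication: for any edge $v_1 v_2 \in E(N)$, the triangle $u^* v_1 v_2$ forbids every $v_1$-$v_2$ path of length $5$ in $G - u^*$. Iterating this with different triangle-edges will rule out matchings of size two inside $N$, long paths inside $N \cup M$ attached at such triangle-edges, and dense attachment of $M$ to $N$. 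In parallel I plan an edge-switching argument of Kelmans type: if some $w \neq u^*$ has $x(w) < 1$, replacing a well-chosen edge at $w$ by an edge at $u^*$ strictly increases the spectral radius unless doing so creates a forbidden $C_6^{\triangle}$, and each such obstruction will pin down the local structure around $u^*$.

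Once the structural lemma is in place, $G$ reduces essentially to $K_3 \nabla sK_1$ with $s = |B|$ plus at most a small residual, and a direct eigenvalue computation delivers $\lambda^2 - 2\lambda \leq 3s = m-3$; the equality case forces $|B| = (m-3)/3$ with no residual, hence $G \cong K_3 \nabla \frac{m-3}{3}K_1$. The main obstacle is unquestionably the structural lemma: translating the single exclusion "no $C_6^{\triangle}$" into the explicit decomposition above requires careful case analysis on the induced subgraph of $N$ (handling matchings, paths of length three, triangles, and mixed configurations separately), and the hypothesis $m \geq 27$ enters at precisely this stage to eliminate small-$|N|$ exceptional configurations in which the forbidden $C_6^{\triangle}$-condition would be satisfied only vacuously.
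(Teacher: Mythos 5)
Your overall frame (extremal $G$, Perron vector, the walk identity for $\lambda^2 x_{u^*}$ combined with $2\lambda x_{u^*}=2\sum_{v\in N}x_v$, reduction to $\lambda^2-2\lambda\le m-3$) is the same as the paper's, but the step you yourself call the heart of the argument --- the structural lemma --- has a genuine gap: it is not a consequence of $C_6^{\triangle}$-freeness, no matter how the case analysis on length-$5$ paths is organized. What $C_6^{\triangle}$-freeness actually yields is only that $G[N(u^*)]$ is $P_6$-free (a path $u_1\cdots u_6$ in $N(u^*)$ closes to the $C_6$ given by $u^*u_1u_2u_3u_4u_5u^*$ with $u_6$ as the triangle apex on the adjacent pair $u_5,u^*$), and the class of $P_6$-free neighbourhoods is vastly larger than $\{u_2,u_3\}\cup B$ with $B$ independent: $G[N(u^*)]$ may be $K_5$, a star, an induced $2K_2$, a $K_{2,t}$ with pendants, etc., all without creating any $C_6^{\triangle}$. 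Two of your intermediate claims are false as structural statements: (i) matchings of size two inside $N$ cannot be ruled out --- the target graph itself contains the matching $\{u_2b,\,u_3b'\}$ in $N$, and even an induced $2K_2$ in $N(u^*)$ produces no $C_6^{\triangle}$; (ii) attaching a pendant vertex $w$ to some $b\in B$ in $K_3\nabla sK_1$ gives a $C_6^{\triangle}$-free graph with $N(w)=\{b\}\not\subseteq\{u_2,u_3\}$, so the claim that $M$ attaches only to $\{u_2,u_3\}$ also fails. All of these configurations can only be eliminated \emph{spectrally}, and that elimination is where the entire difficulty of the theorem lives.

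The mechanism the paper uses, and which your sketch lacks, is the following bookkeeping. Assuming $\lambda\ge 1+\sqrt{m-2}$ (so $m-3\le\lambda^2-2\lambda$), the degree and walk identities combine into $e(W)\le\sum_H\eta(H)-2\sum_{u\in N_0(u^*)}x_u/x_{u^*}+3$, where $\eta(H)=\sum_{u\in V(H)}(d_H(u)-2)x_u/x_{u^*}-e(H)$ runs over the non-trivial components $H$ of $G[N(u^*)]$. Upper bounds on $\eta(H)$ for \emph{every} possible $P_6$-free component --- obtained from the Balister et al.\ edge bound for connected $P_6$-free graphs and the van 't Hof--Paulusma dominating-complete-bipartite characterization, via a fourteen-family case analysis --- force in turn $e(W)=0$, a unique component $H$ with $\delta(H)\ge 2$, then $\eta(H)=-3$, $N_0(u^*)=\emptyset$ and $H=K_2\nabla(h-2)K_1$, and a final eigenvector-equality argument kills $W$. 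Your Kelmans-type switching backup does not obviously substitute for this: in the fixed-size setting a switch can change which vertex is extremal and can create a $C_6^{\triangle}$ far from the switched edge, and the paper never needs it. Without the $\eta$-accounting (or an equivalent spectral elimination of all the other $P_6$-free neighbourhoods), the proposal does not close.
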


Since $C_7$ is a subgraph of $C_6^{\triangle}$, we have $\mathcal{G}(m,C_7)\subseteq \mathcal{G}(m,C_6^{\triangle})$. Combining Theorem 1.5 of \cite{ZLS2021}, we obtain the following result.

\begin{corollary}\label{cor-1}
Let $G$ be a graph with size $m\ge 38$. If the spectral radius 
\[ \lambda(G)\ge1+\sqrt{m-2}, \] 
then $G$ must contain all the cycles $C_i$ for $3\le i\le 7$ unless $G\cong K_3\nabla \left(\frac{m-3}{3}K_1\right)$
\end{corollary}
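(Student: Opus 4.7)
The plan is to bootstrap Theorem \ref{thm-1} via the subgraph inclusion $C_7\subseteq C_6^{\triangle}$ and then patch in the shorter cycles $C_3,\ldots,C_6$ using the $k=2$ case of Conjecture~\ref{con-1} that was proved in \cite{ZLS2021}. For the $C_7$ case, the observation $\mathcal{G}(m,C_7)\subseteq \mathcal{G}(m,C_6^{\triangle})$ does all the work: if $G$ has no $C_7$ and $\lambda(G)\ge 1+\sqrt{m-2}$, then $G\in \mathcal{G}(m,C_6^{\triangle})$, so Theorem \ref{thm-1} forces both $\lambda(G)=1+\sqrt{m-2}$ and $G\cong K_3\nabla \tfrac{m-3}{3}K_1$. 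Contrapositively, whenever $G$ is not this extremal graph, $G$ must contain a $C_7$.

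For the shorter lengths $t\in\{3,4,5,6\}$, I would invoke Theorem~1.5 of \cite{ZLS2021}, which says that $\lambda(G)\ge \tfrac{1+\sqrt{4m-3}}{2}$ already guarantees every cycle $C_t$ with $3\le t\le 6$, the only potential exception being the book graph $K_2\nabla \tfrac{m-1}{2}K_1$. A short squaring argument shows $1+\sqrt{m-2}\ge \tfrac{1+\sqrt{4m-3}}{2}$ for $m\ge 3$ (it reduces to $\sqrt{m-2}\ge 1$), so our hypothesis is strictly stronger than the one assumed there. Moreover, the book graph has spectral radius exactly $\tfrac{1+\sqrt{4m-3}}{2}$, which is strictly less than $1+\sqrt{m-2}$ for $m\ge 4$, so it cannot fulfill the spectral assumption of Corollary \ref{cor-1} and is automatically excluded. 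Hence $G$ must contain $C_3,C_4,C_5,C_6$.

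Combining the two steps yields the corollary: whenever $\lambda(G)\ge 1+\sqrt{m-2}$, the graph $G$ contains $C_3,\ldots,C_6$ unconditionally and also contains $C_7$ unless $G\cong K_3\nabla \tfrac{m-3}{3}K_1$. I note in passing that the extremal graph itself still contains $C_3,\ldots,C_6$ (one checks this directly by alternating between the triangle and the independent part), so the \emph{unless} clause really reflects only the missing $C_7$. There is no genuine obstacle in the argument; it is a bookkeeping consequence of Theorem \ref{thm-1} together with the $k=2$ result of \cite{ZLS2021}, and the only routine care needed is the numerical threshold comparison and the exclusion of the book graph.
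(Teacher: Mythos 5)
Your proposal is correct and is essentially the paper's own argument: the paper likewise derives the $C_7$ case from Theorem \ref{thm-1} via $\mathcal{G}(m,C_7)\subseteq\mathcal{G}(m,C_6^{\triangle})$ and handles $C_3,\ldots,C_6$ by citing Theorem~1.5 of \cite{ZLS2021}. Your added checks (that $1+\sqrt{m-2}\ge\frac{1+\sqrt{4m-3}}{2}$ for $m\ge 3$ and that the book graph is excluded by the spectral hypothesis) are accurate and merely make explicit what the paper leaves as a two-sentence remark.
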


Corollary \ref{cor-1} proves a very special case of Conjecture \ref{con-1}.
\section{Preliminaries}\label{sec-2}
For a graph $G$ and a subset $S\subseteq V(G)$, let $G[S]$ denote the subgraph of $G$ induced by $S$. Let $e(G)$ denote the size of $G$. For two vertex subsets $S$ and $T$ of $G$ (where $S\cap T$ may not be empty), let $e(S,T)$ denote the number of edges with one endpoint in $S$ and the other in $T$. The notation $e(S,S)$ is simplified by $e(S)$. For a vertex $v\in V(G)$, let $N(v)$ be the neighborhood of $v$, $N[v]=N(v)\cup \{v\}$ and $N^2(v)$ be the set of vertices of distance two to $v$. In particular, let $N_S(v)=N(v)\cap S$ and $d_S(v)=|N_S(v)|$.

It is known that $A(G)$ is irreducible and nonnegative for a connected graph $G$. From the Perron-Frobenius Theorem, there is a unique positive unit eigenvector corresponding to $\lambda(G)$, which is called the \emph{Perron vector} of $G$. Let $\mathbf{x}$ be the Perron vector of $G$ with coordinate $x_v$ corresponding to the vertex $v\in V(G)$. A vertex $u$ in $G$ is said to be an \emph{extremal vertex} if $x_u=\max\{x_v\colon v\in V(G)\}$.

A \emph{cut vertex} of a graph is a vertex whose deletion increases the number of components. A graph is called
\emph{$2$-connected}, if it is a connected graph without cut vertices.

\begin{lemma}[\cite{ZLS2021}]\label{lem-x-3}
Let $G$ be a graph in $\mathcal{G}(m, F)$ such that $\lambda(G)$ is as large as possible, where $F$ is $2$-connected. Then $G$ is connected, and there exists no cut vertex in $V(G)\setminus \{u^*\}$ where $u^*$ is an extremal vertex of $G$.
\end{lemma}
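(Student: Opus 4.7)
The strategy is an edge-relocation argument that exploits the $2$-connectedness of $F$: since every vertex of $F$ has degree at least two, no copy of $F$ can contain a cut vertex of the host graph. Consequently, a surgery that detaches a subgraph at one vertex and reattaches it at another---creating a cut vertex in the new graph---automatically preserves $F$-freeness. I apply this principle once to force connectedness and once to eliminate non-extremal cut vertices.

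Part $(i)$: connectedness. Suppose for contradiction that $G$ has distinct components $C_{1}, C_{2}$ with $u^{*} \in C_{1}$; then $\lambda(G) = \lambda(C_{1})$. Pick any $z \in V(C_{2})$ and form $G'$ by identifying $z$ with $u^{*}$. No vertex becomes isolated, $|E(G')| = m$, and $u^{*}$ is a cut vertex of $G'$; any $2$-connected subgraph of $G'$ lies on one side of $u^{*}$, hence is isomorphic to a subgraph of $G$, so $G'$ is $F$-free. Extending the Perron vector $\mathbf{x}^{(1)}$ of $C_{1}$ by zero to a vector $\mathbf{y}$ on $V(G')$, one checks $A(G')\mathbf{y} \geq \lambda(C_{1})\mathbf{y}$ componentwise, with strict inequality at each former neighbor of $z$ in $C_{2}$ (there $A(G')\mathbf{y}$ equals $x^{(1)}_{u^{*}} > 0$ while $y = 0$). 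Since $G'$ is connected and hence $A(G')$ irreducible, the standard Perron--Frobenius argument yields $\lambda(G') > \lambda(G)$, contradicting maximality. If $G$ has more than two components, iterate.

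Part $(ii)$: cut vertex. Assume $G$ is connected and suppose $v \in V(G) \setminus \{u^{*}\}$ is a cut vertex. Let $V_{1}$ be the component of $G - v$ containing $u^{*}$ and $U = V(G) \setminus (V_{1} \cup \{v\})$. Build $G'$ by replacing every edge $vw$ with $w \in U$ by the edge $u^{*}w$, leaving all other edges untouched. Then $|E(G')| = m$, no vertex is isolated, $u^{*}$ is a cut vertex of $G'$ separating $U$ from $V_{1} \cup \{v\}$, and each side is isomorphic (via $v \leftrightarrow u^{*}$ on the $U$-side) to the corresponding side of $G$, so $G'$ is $F$-free. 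With $\mathbf{x}$ the Perron vector of $G$, a direct computation gives
\[ \mathbf{x}^{T} A(G') \mathbf{x} - \mathbf{x}^{T} A(G) \mathbf{x} = 2 \sum_{w \in U,\, vw \in E(G)} (x_{u^{*}} - x_{v})\, x_{w}. \]
This is $\geq 0$ since $x_{u^{*}} \geq x_{v}$ and the sum is over a non-empty set ($v$ has at least one neighbor in $U$ because $v$ separates $V_{1}$ from $U$); it is strictly positive whenever $x_{u^{*}} > x_{v}$, and then $\lambda(G') > \lambda(G)$, contradicting maximality.

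The principal obstacle is the borderline equality $x_{u^{*}} = x_{v}$, where the single surgery fails to strictly increase $\lambda$. This is handled by noting that if $\mathbf{x}$ remained a Perron vector of $G'$, the Perron equations at $u^{*}$ and $v$ in both $G$ and $G'$ would force a rigid structural identity between their neighborhoods, so one can re-choose $u^{*} := v$ and iterate, or appeal to a finer perturbation of $\mathbf{x}$ to recover strict increase. Everything else reduces to routine verification that both surgeries preserve the edge count and the no-isolated-vertex condition.
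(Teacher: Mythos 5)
The paper does not prove this lemma itself; it is quoted from [ZLS2021], and your two surgeries (merging components at a vertex, and redirecting the $U$-side edges of a cut vertex $v$ to the extremal vertex $u^*$) are exactly the standard route. Your verification that both operations preserve the edge count, the no-isolated-vertex condition, and $F$-freeness (via the fact that a $2$-connected subgraph cannot straddle a cut vertex) is correct, and Part (i) is complete.

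There is, however, a genuine gap in Part (ii): the equality case $x_{u^*}=x_v$ is not resolved, and neither of your suggested fixes works as stated. Re-choosing $u^*:=v$ and iterating does not terminate in a contradiction (the lemma asserts non-existence of cut vertices outside one fixed extremal vertex, and swapping the distinguished vertex can cycle), and ``a finer perturbation'' is not an argument. The correct and standard resolution is short: by maximality $\lambda(G')\le\lambda(G)$, so the chain $\lambda(G')\ge \mathbf{x}^{T}A(G')\mathbf{x}\ge \mathbf{x}^{T}A(G)\mathbf{x}=\lambda(G)$ collapses to equalities; hence $\mathbf{x}$ attains the Rayleigh maximum for the connected graph $G'$ and is therefore its (positive) Perron vector. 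Writing the eigenvalue equation at $v$ in $G$ and in $G'$ and subtracting gives
\[
0=\lambda x_v-\lambda x_v=\sum_{w\in N_G(v)\cap U}x_w>0,
\]
since $v$, being a cut vertex, has at least one neighbour in $U$ and $\mathbf{x}>0$ --- a contradiction that kills the equality case outright. You should replace your final paragraph with this argument. A minor cosmetic point in Part (i): for a disconnected $G$ the Perron vector and hence $u^*$ are not canonically defined, so it is cleaner to let $C_1$ be a component attaining $\lambda(G)$ and work with its own Perron vector, which is what your computation actually uses.
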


For $n\ge k>2s >0$, let $G_{n,k,s}=(K_{k-2s}\cup(n-k+s)K_1 )\nabla K_s$.
\begin{lemma}[\cite{Balister}]\label{lem-x-5}
Let $G$ be a connected graph on n vertices containing no path on $k+1$ vertices, $n > k\ge 3$. Then 
\[ e(G) \le \max \left\{\binom{k-1}{2} + (n - k + 1), 
\binom{\left \lceil \frac{k+1}{2} \right \rceil }{2} + 
\left \lfloor \frac{k-1}{2} \right \rfloor \left(n-\left \lceil \frac{k+1}{2} \right \rceil \right) \right\}. \] 
 If equality
occurs then $G$ is either $G_{n,k,1}$ or $G_{n,k,\left \lfloor (k-1)/2 \right \rfloor}$.
\end{lemma}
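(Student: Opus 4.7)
The plan is to prove this classical extremal result by induction on $n$ combined with a longest-path/rotation-extension analysis. Fix $G$ connected and $P_{k+1}$-free on $n>k\ge 3$ vertices, and take a longest path $P=v_0v_1\cdots v_p$; by hypothesis $p\le k-1$, and by maximality $N(v_0),N(v_p)\subseteq V(P)$. The P\'osa rotation-extension technique then yields a set $L$ of ``effective endpoints'' of longest paths sharing the vertex set $V(P)$, each with all neighbors contained in $V(P)$, and with $|L|$ essentially at least $\delta(G)$.

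The core of the proof is a case split on $\delta(G)$. If $\delta(G)\le \lfloor(k-1)/2\rfloor$, pick a vertex $v$ attaining the minimum, write $e(G)=e(G-v)+d(v)$, decompose $G-v$ into its connected components (each still $P_{k+1}$-free), apply the inductive hypothesis to those of order $>k$ while using the trivial bound $\binom{|V|}{2}$ on the small ones, and optimize the resulting arithmetic against the claimed maximum. If $\delta(G)\ge \lfloor(k-1)/2\rfloor+1$, then the large rotation set $L$ produces many rotated endpoints, each with $\ge \lfloor(k-1)/2\rfloor+1$ neighbors on $P$; a pigeonhole/averaging argument on pairs of rotated endpoints either finds two of them joined by an edge (which, together with the rotated longest paths, yields a path of length $\ge k$, contradicting the $P_{k+1}$-freeness), or pins down the structure of $G_{n,k,\lfloor(k-1)/2\rfloor}$, where the high-degree vertices form the apex $K_s$ with $s=\lfloor(k-1)/2\rfloor$ and the rest distributes itself into $(n-k+s)K_1$ joined to $K_s$.

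For the equality case, I would trace tightness through both branches: in the low-minimum-degree branch, equality forces $d(v)=1$ and $G-v$ to already be extremal of type $G_{n-1,k,1}$, producing $G_{n,k,1}$ after re-attachment; in the high-minimum-degree branch, the rotation argument is tight only if neighborhoods of all rotated endpoints coincide on exactly $s$ common vertices, which must then form $K_s$ together with all their edges to the remaining vertices. The main obstacle I anticipate is precisely this extremal characterization: the two candidate families both attain the maximum in different regimes of $(n,k)$, so distinguishing their tight regions requires a careful comparison of the two expressions in the $\max$ (both are polynomial in $n$, one linear and one roughly linear with a different slope), and in the induction step the possible disconnection of $G-v$ forces delicate bookkeeping to ensure extremality concentrates in one component rather than diffusing across several.
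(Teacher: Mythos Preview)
This lemma is not proved in the paper at all: it is quoted verbatim from Balister--Gy\H{o}ri--Lehel--Schelp \cite{Balister} and used as a black box (only the special case $k=5$ is invoked, inside Lemma~\ref{lem-1}). Consequently there is no ``paper's own proof'' to compare your proposal against.

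As for the proposal itself, the overall architecture---induction on $n$ with a case split on $\delta(G)$, combined with a longest-path/rotation analysis in the high-minimum-degree regime---is indeed the shape of the argument in \cite{Balister}. Two points of caution. First, in the high-$\delta$ branch your claimed dichotomy (``either two rotated endpoints are adjacent, or the structure is forced'') is too glib: adjacency of two rotated endpoints gives a cycle on $V(P)$, and then connectivity plus $n>|V(P)|$ yields a longer path, which is fine; but the alternative---pinning down $G_{n,k,\lfloor (k-1)/2\rfloor}$ from the mere fact that many endpoints have $\ge \lfloor (k-1)/2\rfloor+1$ neighbours on $P$---requires more than pigeonhole, and in \cite{Balister} is handled by a careful analysis of which positions on $P$ can be neighbours of both endpoints simultaneously. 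Second, in the low-$\delta$ branch, deleting a minimum-degree vertex can disconnect $G$ into several components, some of which may have order $\le k$; your plan to bound these by $\binom{|V|}{2}$ and ``optimize the arithmetic'' hides the genuine work, since one must show the sum over components, plus $d(v)$, never exceeds the claimed maximum, and that equality forces a single nontrivial component. Both obstacles are surmountable along the lines you sketch, but neither is automatic.
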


Note that if $G$ is $C_6^{\triangle}$-free, then 
for every $v\in V(G)$, we know that 
the induced subgraph $G[N(v)]$ is $P_6$-free. 
The following lemma gives  clearly the structure of 
induced $P_6$-free graphs. 
A set $U\subseteq V$ \emph{dominates} a set $U'\subseteq V$ if any vertex $v \in U'$ either lies in $U$ or has a neighbor in $U$. We also say that $U$ dominates $G[U']$. A subgraph $H$ of $G$ is a \emph{dominating subgraph} of $G$ if $V(H)$ dominates $G$.
\begin{lemma}[\cite{Liu}]\label{lem-x-6}
A graph $G=(V,E)$ contains no induced subgraph isomorphic to $P_6$ if and only if each connected induced subgraph of G contains a dominating induced $C_6$ or a dominating (not necessarily induced) complete bipartite graph.
\end{lemma}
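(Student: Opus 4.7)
The plan is to prove the two directions of the equivalence separately. For the $(\Leftarrow)$ direction I argue by contrapositive. If $G$ contains an induced $P_6$ on vertices $v_1v_2\cdots v_6$, then $P_6$ itself is a connected induced subgraph of $G$, so by hypothesis it must contain a dominating induced $C_6$ or a dominating (not necessarily induced) complete bipartite subgraph. The first is impossible since $|V(P_6)|=6$ and $P_6\not\cong C_6$. The second is impossible because $P_6$ is a tree and hence $C_4$-free, so every biclique subgraph is a star $K_{1,t}$ with $t\le 2$; such a star dominates at most $5$ consecutive vertices of $P_6$ and therefore misses one of the endpoints $v_1$ or $v_6$. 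This contradicts the hypothesis and proves $G$ is induced-$P_6$-free.

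For the $(\Rightarrow)$ direction, let $H$ be a connected induced subgraph of the induced-$P_6$-free graph $G$. Any shortest path in $H$ is itself induced, so $\mathrm{diam}(H)\le 4$. I would fix a vertex $v_0$ of minimum eccentricity $r\le 4$ and write the BFS layers as $L_0=\{v_0\},L_1,\ldots,L_r$. The proof then splits on $r$. If $r\le 2$, the star $K_{1,|L_1|}$ centered at $v_0$ is already a dominating complete bipartite subgraph. If $r=3$, I would build a dominating biclique by choosing a subset $B\subseteq L_1$ whose common neighborhood in $L_2$ reaches every vertex of $L_3$; the absence of an induced $P_6$ forces enough back-edges from $L_3$ through $L_2$ into $L_1$ for such $B$ to exist. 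If $r=4$, take a shortest $v_0$-to-$u$ path for some $u\in L_4$, yielding an induced $P_5$, and locate a sixth vertex bridging its endpoints through an earlier layer so as to close it into an induced $C_6$; then verify layer by layer that every vertex of $H$ either lies on or has a neighbor on this $C_6$, since otherwise it would extend the path to a forbidden induced $P_6$.

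The main obstacle will be the diameter-$4$ case. There one has to construct a six-cycle that is simultaneously induced (chordless) \emph{and} dominating. Each property taken in isolation is enforced by a standard configuration argument using induced-$P_6$-freeness, but coupling the two requires a careful choice of the closing sixth vertex among several candidates together with a layer-by-layer inventory of cross-edges in the BFS partition certifying that no vertex is left undominated. This coupled construction is the combinatorial heart of the lemma and is precisely where the full strength of the hypothesis is needed.
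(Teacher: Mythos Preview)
This lemma is quoted from van~'t~Hof and Paulusma \cite{Hof}; the present paper gives no proof of it and uses it only as a black box. So there is no in-paper argument to compare your attempt against, and what follows is an assessment of your sketch on its own merits.

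Your $(\Leftarrow)$ direction is correct and essentially complete.

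Your $(\Rightarrow)$ direction has a genuine gap. The case split by radius does not line up with the biclique/$C_6$ dichotomy in the way you assume. Concretely, you claim that when $r=3$ one can always build a dominating complete bipartite subgraph, but $H=C_6$ is a counterexample: it is connected, induced-$P_6$-free, and has radius~$3$ (every vertex has eccentricity~$3$), yet it contains no dominating biclique. Since $C_6$ has girth~$6$, every complete bipartite subgraph of it is a star $K_{1,1}$ or $K_{1,2}$, and---by exactly the argument you gave for $P_6$ in the reverse direction---no such star dominates all six vertices. Here the conclusion of the lemma is salvaged by the \emph{other} alternative (the whole graph is itself a dominating induced $C_6$), but your plan never routes a radius-$3$ graph to that branch. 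Running your recipe on $C_6$ with center $v_0=v_1$ gives $L_1=\{v_2,v_6\}$, $L_2=\{v_3,v_5\}$, $L_3=\{v_4\}$; for $B=\{v_2\}$ or $B=\{v_6\}$ the resulting edge dominates only four vertices, and for $B=\{v_2,v_6\}$ the common neighbourhood in $L_2$ is empty.

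More generally, both outcomes of the dichotomy can occur at the same radius, so organizing the forward direction purely by BFS depth cannot succeed without, at minimum, allowing the $r=3$ case to produce either structure. The argument in \cite{Hof} is not organized by radius, and your $r=3$ branch (and with it the overall case split) would have to be reworked from scratch.
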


\section{Proof}\label{sec-3}
Let $G^*$ be a graph in $\mathcal{G}(m,C_6^{\triangle})$ with the maximum spectral radius. In the view of Lemma \ref{lem-x-3}, we know that $G^*$ is connected. Assume that $\mathbf{x}$ is the Perron vector of $G^*$ and let $u^*$ be the extremal vertex of $G^*$. Since $K_3\nabla \frac{m-3}{3}K_1$ is $C_6^{\triangle}$-free, one has
\[\lambda :=\lambda(G^*)\ge \lambda(K_3\nabla \tfrac{m-3}{3}K_1)=1+\sqrt{m-2}.\]

Denote $W=V(G^*)\setminus (N(u^*)\cup \{u^*\})$. Let $N_0(u^*)$ 
be the isolated vertices of the induced subgraph $G^*[N(u^*)]$, 
and $N_+(u^*)=N(u^*)\setminus N_0(u^*)$ be the vertices 
of $N(u^*)$ with degree at least one in $G^*[N(U^*)]$. 
Since $A(G^*)\mathbf{x}=\lambda\mathbf{x}$, we have
\begin{equation}\label{eq1}
\lambda x_{u^*}=\sum_{u\in N_0(u^*)}x_u+\sum_{u\in N_+(u^*)}x_u.
\end{equation}
In addition, we also have $A^2(G)\mathbf{x}={\lambda}^2\mathbf{x}$. It follows that
\begin{align}\label{eq2}
{\lambda}^2x_{u^*}=\sum_{u\in V(G^*)}a_{u^*u}^{(2)}x_u
=d(u^*)x_{u^*}+\sum_{u\in N_+(u^*)}d_{N(u^*)}(u)x_u+\sum_{w\in N^2(u^*)}d_{N(u^*)}(w)x_w,
\end{align}
where $a_{vu}^{(2)}$ denotes the number of walks of length 2 starting 
from $v$ to $u$.

Note that $\lambda\ge 1+\sqrt{m-2}\ge 7$ whenever $m\ge 38$. Then 
\[ m-3 \le \lambda^2-2\lambda.\]
 In view of (\ref{eq1}) and (\ref{eq2}), one has
{\small\begin{align}
&(m-3)x_{u^*}\le (\lambda^2-2\lambda)x_{u^*} \notag\\[2mm]
&=|N(u^*)|x_{u^*}+\sum_{u\in N_+(u^*)}(d_{N(u^*)}(u)-2)x_u+\sum_{w\in N^2(u^*)}d_{N(u^*)}(w)x_w-2\sum_{u\in N_0(u^*)}x_u \label{eq3}\\
&\le |N(u^*)|x_{u^*}+\sum_{u\in N_+(u^*)}(d_{N(u^*)}(u)-2)x_u 
+e(W,N(u^*)) x_{u^*} -2\sum_{u\in N_0(u^*)}x_u. \notag
\end{align}}
Since $m=|N(u^*)| + e(N_+(u^*)) + e(W) + e(W,N(u^*))$, it follows that
\begin{align}\label{eq}
 e(W)\le \sum_{u\in N_+(u^*)}(d_{N(u^*)}(u)-2)x_u/x_{u^*}-e(N_+(u^*))-2\sum_{u\in N_0(u^*)}x_u/x_{u^*}+3  .
\end{align}
Moreover,  the  above equality holds if and only if $\lambda^2-2\lambda=m-3$, and $x_w=x_{u^*}$ 
for every $w\in W$ satisfying $ d_{N(u^*)}(w)\ge 1$.

For each non-trivial connected component $H$ of $G^*[N(u^*)]$, we denote 
\[  \eta(H) :=\sum_{u\in V(H)}(d_H(u)-2)x_u/x_{u^*}-e(H). \] 
 Then (\ref{eq}) gives 
\begin{align}\label{eq4}
e(W)\le \sum_H\eta(H)-2\sum_{u\in N_0(u^*)}x_u/x_{u^*}+3,
\end{align}
where $H$ varies over all non-trivial components of $G^*[N(u^*)]$.

We would like to emphasize
that $\eta(H)$ is an important parameter in this paper, 
and it plays a significant role in the proof of Theorem \ref{thm-1}.
First of all, in the forthcoming Lemma \ref{lem-1} and Lemma \ref{lem-2}, 
we will show that $\eta(H)$ is a nonpositive value, and we also provide some upper bounds on
$\eta (H)$, where $H$ is a non-trivial component of $G^*$. 
Moreover, we will use these upper bounds to show that $G^*[N(u^*)]$ contains exactly one component $H$ and then 
we characterize the structure of this component; 
see Lemmas \ref{lem-3}, \ref{lem-4} and \ref{lem-8} for more details.  
 
\begin{lemma}\label{lem-1}
Let $H$ be a non-trivial component of $G^*[N(u^*)]$. If $\delta(H)\ge 2$, then 
\[  \eta (H) \le \begin{cases}
 0, & \text{if $H=K_5$,}\\
 -1,& \text{if $H=K_5-e$,} \\
 -2, & \text{if $H=K_4$ or $K_5-2e$,} \\
 -3, & \text{otherwise,}
\end{cases}  \]
where $K_5-e$ denotes the graph obtained from $K_5$ by deleting one edge and $K_5-2e$ denotes the graph obtained from $K_5$ by deleting any two edges.
\end{lemma}

\begin{proof}
Since $\delta(H)\ge 2$ and $x_u/x_{u^*} \le 1$, we have
\begin{align}\label{eq'}
    \eta(H)=\sum_{u\in V(H)}(d_H(u)-2)x_u/x_{u^*}-e(H)\le e(H)-2|V(H)|
\end{align}
and equality holds only if $x_u=x_{u^*}$ for all $u\in V(H)\setminus\{x\in V(H)\colon d_H(x)=2\}$.

Note that $H$ is $P_6$-free since $G^*$ is $C_6^{\triangle}$-free. Let $h=|V(H)|$. If $h\ge 6$ then we have
\[e(H)\le \max\{h+2,2h-3\}=2h-3\]
from Lemma \ref{lem-x-5}. It follows that
\[\eta(H)\le e(H)-2h\le 2h-3-2h=-3,\]
and the equality occurs only if $H=G_{h,5,2}=K_2\nabla (h-2)K_1$. 

In addition, for $h=3$ or $4$, we have 
\[\eta(H)\le \binom{h}{2}-2h=\left\{\begin{array}{cc}
    -3 & \text{if } h=3,  \\
    -2 & \text{if } h=4,
\end{array}\right.\]
where each equality holds if and only if $H$ is complete. For $h=5$,
\[\eta(H)\le \left\{\begin{array}{cl}
    0 & \text{if } H=K_5,\\
    -1 & \text{if } H=K_5-e,  \\
    -2 & \text{if } H=K_5-2e,\\
    -3 &\text{otherwise}.
\end{array}\right.\]

The proof is completed.
\end{proof}

\begin{lemma}\label{lem-2}
Let $H$ be a non-trivial component of $G^*[N(u^*)]$. If $\delta(H)=1$ then
\[\eta(H)<\left\{\begin{array}{cc}
    -1, & \text{if } H=K_2, \\
    -2, & \text{otherwise}.
\end{array}\right.\]
\end{lemma}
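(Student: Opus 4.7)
The plan is to separate $V(H)$ into the set of pendant vertices $P=\{v\in V(H):d_H(v)=1\}$ and the remaining set $Q=V(H)\setminus P$, and then reduce the desired bound on $\eta(H)$ to an extremal inequality on $e(H[Q])$. The case $H=K_2$ is immediate: both vertices are pendants, so $\eta(K_2)=-x_{v_1}/x_{u^*}-x_{v_2}/x_{u^*}-1<-1$, using positivity of the Perron vector.

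Hence I assume $H\neq K_2$, so $|V(H)|\geq 3$, and every pendant of $H$ has its unique neighbor in $Q$ (otherwise it would form a $K_2$-component of $H$). This gives $|Q|\geq 1$ and $e(H)=e(H[Q])+p$ with $p=|P|$. Using $x_v/x_{u^*}\leq 1$ on $Q$ (where $d_H(v)-2\geq 0$) together with $\sum_{v\in Q}d_H(v)=2e(H)-p$, I obtain
\[
\eta(H)\leq -p_x+\sum_{v\in Q}(d_H(v)-2)-e(H)=e(H[Q])-2|Q|-p_x,
\]
where $p_x:=\sum_{v\in P}x_v/x_{u^*}>0$. It therefore suffices to establish the structural bound $e(H[Q])\leq 2|Q|-2$, which then gives $\eta(H)\leq -2-p_x<-2$.

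I prove this structural bound by splitting on $|Q|$. If $|Q|\leq 4$ it is immediate from $e(H[Q])\leq\binom{|Q|}{2}\leq 6$. If $|Q|\geq 6$, then $H[Q]$ is connected (any simple $H$-path between two vertices of $Q$ can use pendants only as endpoints, hence stays inside $Q$) and $P_6$-free as a subgraph, so Lemma \ref{lem-x-5} applied with $k=5$ yields $e(H[Q])\leq 2|Q|-3$. The main obstacle is the remaining case $|Q|=5$, where Lemma \ref{lem-x-5} is unavailable and the trivial bound $\binom{5}{2}=10$ is too weak. Here $\delta(H)=1$ forces $p\geq 1$, so I may pick a pendant $v_0\in P$ with neighbor $u_0\in Q$. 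If $H[Q]$ contained a $P_5$ with endpoint $u_0$, prepending $v_0$ would give a $P_6$ in $H$, contradicting the $P_6$-freeness of $G^*[N(u^*)]$ noted in the proof of Lemma \ref{lem-1}. A direct check shows every vertex of $K_5$ and of $K_5-e$ is an endpoint of some Hamilton path, so $H[Q]\notin\{K_5,K_5-e\}$, yielding $e(H[Q])\leq 8=2|Q|-2$ and completing the proof.
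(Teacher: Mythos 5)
Your proof is correct, but it takes a genuinely different route from the paper. The paper invokes the van't Hof--Paulusma characterization (Lemma \ref{lem-x-6}) to extract a dominating complete bipartite subgraph $K_{s,t}$ with $s\le 2$, and then runs an exhaustive case analysis over the resulting configurations $F_1,\dots,F_{14}$, computing $\eta$ for each one. You instead split $V(H)$ into pendants $P$ and the core $Q$, use the degree-sum identity to reduce everything to the single structural inequality $e(H[Q])\le 2|Q|-2$, and dispatch that by the trivial bound for $|Q|\le 4$, by Lemma \ref{lem-x-5} (after observing that shortest paths between core vertices avoid pendants, so $H[Q]$ is connected) for $|Q|\ge 6$, and by a short Hamilton-path argument excluding $K_5$ and $K_5-e$ for $|Q|=5$; all steps check out, including the strictness coming from $p_x>0$ (which uses $\delta(H)=1$, so $P\ne\emptyset$). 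Your argument is shorter and avoids Lemma \ref{lem-x-6} entirely, which is a genuine gain. The trade-off is that the paper's case analysis is reused later: the proof of Lemma \ref{lem-4} asserts that a component with $\delta(H)=1$ and $\eta(H)\ge -3$ must be a star or $F_3$ ``due to Lemma \ref{lem-2}'', a fact drawn from the internal classification rather than from the lemma's statement. If your proof replaced the paper's, that downstream step would need a separate (short) justification, e.g.\ by noting that your chain of inequalities forces $e(H[Q])=2|Q|-2$ in that regime and analyzing the few graphs attaining it.
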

\begin{proof}
Firstly, suppose that $H$ is a star, i.e., $H=K_{1,t}$ with $v$ being the non-pendant vertex and $v_1,\ldots,v_t$ being pendant vertices. If $t=1$, then $\eta(H)=-\sum_{u\in V(H)}x_u/x_{u^*}-1<-1$. If $t\ge 2$, then $\eta(H)=(t-2)x_v/x_{u^*}-\sum_{i=1}^tx_{v_i}/x_{u^*}-t\le(t-2)-\sum_{i=1}^tx_{v_i}/x_{u^*}-t<-2$.

Suppose $H$ is non-star. Since $H$ is $P_6$-free, $H$ contains a dominating (not necessarily induced) complete bipartite graph due to Lemma \ref{lem-x-6}. Assume that $H[S,T]=K_{s,t}$ is such a maximal complete bipartite graph. Denote by $X=V(K_{s,t})$ and $Y=V(H)\setminus X$. Without loss of generality, suppose $s\le t$. Note that $K_{3,3}$ contains $P_6$, we have $s\le 2$.

{\flushleft\bf Case 1.} $s=2$.

In this case, $X$ contains no pendant vertex. Thus, $Y\ne \emptyset$ and there is a pendant vertex in $Y$. Furthermore, $H[Y]$ contains no $K_2$ since $H$ is $P_6$-free.
\begin{figure}[htpb]
    \centering
    \includegraphics[width=13cm]{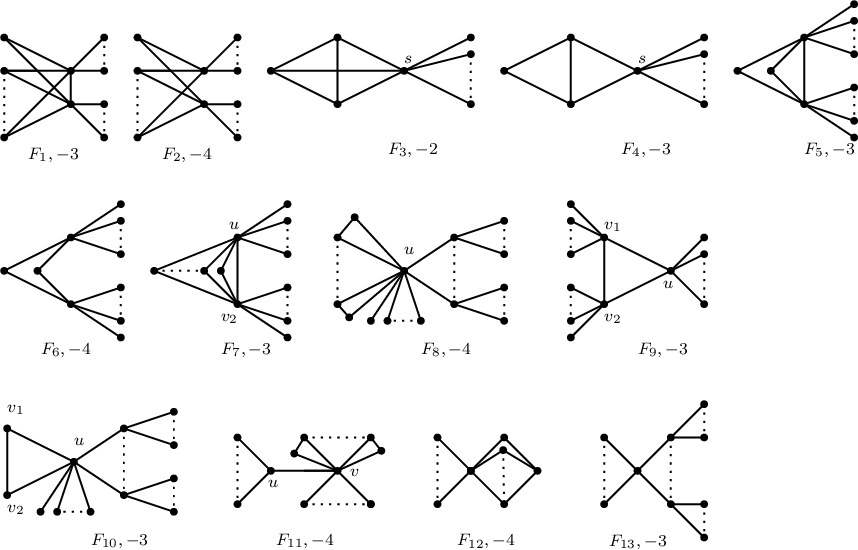}
    \caption{The graphs $F_1, F_2, \ldots, F_{13}$ used in the proofs, with graphs labelled as they appear alongside strict upper bounds for $\eta(F_i)$.}
    \label{fig-1}
\end{figure}
If $t\ge 3$ then $H[T]$ contains no $K_2$ and $N(T)\cap Y=\emptyset$ because $H$ is $P_6$-free. Hence, for any $y\in Y$, $N_H(y)\subseteq S$. If there is a vertex $y\in Y$ such that $N_H(y)=S$, then $H[S,T\cup\{y\}]$ is a complete bipartite graph dominating $H$, which contradicts the maximality of $H[S,T]$. Therefore, for any $y\in Y$, $d_H(y)=1$. It follows that $H=F_1$ if $H[S]$ contains $K_2$, and $H=F_2$ if $H[S]$ contains no $K_2$ (see Figure \ref{fig-1}). By simple calculations, we have
\[\eta(F_1)=\sum_{u\in X}(d_{H}(u)-2)x_u/x_{u^*}+\sum_{v\in Y}(d_H(v)-2)x_v/x_{u^*}-e(H)\le -3-\sum_{v\in Y}x_v/x_{u^*}<-3,\]
and
\[\eta(F_2)=\sum_{u\in X}(d_{H}(u)-2)x_u/x_{u^*}+\sum_{v\in Y}(d_H(v)-2)x_v/x_{u^*}-e(H)\le -4-\sum_{v\in Y}x_v/x_{u^*}<-4.\]

If $t=2$ then $|N(y)\cap S|,|N(y)\cap T|\le1$ for any $y\in Y$ due to the maximality of $H[S,T]$. Since $H$ is $P_6$-free, one can easily verify that $N(Y)\cap T=\emptyset$ if $N(Y)\cap S\ne \emptyset$. Without loss of generality, assume that $N(Y)\cap S\ne\emptyset$, that is, $N_H(y)\in S$ for any $y\in Y$. Since $H$ is $P_6$-free, we have $N(Y)=\{s\}$ for a fixed vertex $s\in S$ when $T$ contains an edge. Therefore, if $T$ contains an edge then $H=F_3$ or $F_4$, and $H=F_5$ or $F_6$ otherwise (see Figure \ref{fig-1}). By similar calculations, we have
\[\eta(F_3)<-2,\eta(F_4)<-3,\eta(F_5)<-3 \mbox{ and }\eta(F_6)<-4.\]
Thus, $\eta(H)<-2$.

{\flushleft\bf Case 2.} $s=1$.

Let $S=\{u\}$. By the maximality of $H[S,T]$, we may assume $t\ge 2$ and $N(Y)\cap S=\emptyset$. Since $H$ is $P_6$-free and $\delta(H)=1$, one can verify that $H[T]$ is $P_4$-free.

{\bf Subcase 2.1.} $H[T]$ contains $P_3$.

Assume $H[T]$ contains $P_3$, say $P_3=v_1v_2v_3$, then $H[Y]$ contains no $K_2$ because $H$ is $P_6$-free. If $v_1\sim v_3$ then $H[u,v_1,v_2,v_3]=K_4$. Therefore, $H$ has the form of $F_3$ shown in Figure \ref{fig-1}. Now, suppose $v_1\not\sim v_3$. If $t=3$ then $H[X]=K_{2,2}+e$ and thus $H$ has the form of $F_5$. If $t\ge 4$ then $N(v)\cap Y=\emptyset$ for all $v\in T\setminus\{v_2\}$ because $H$ is $P_6$-free. Note that $H[T\setminus\{v_1,v_2,v_3\}]$ contains no $K_2$, $H$ has the form of $F_7$. Similarly, $\eta(F_7)\le -3-\sum_{y\in H_0} x_y/x_{u^*}<-3$ where $H_0=\{v\in H\colon d_H(v)=1\}$, and thus $\eta(H)<-3$.

{\bf Subcase 2.2.} $H[T]$ contains no $P_3$ but $P_2$.

Suppose $H[T]$ contains $2K_2$. Since $H$ is $P_6$-free,  we have $N(Y)\subseteq T_0$ and $Y\subseteq H_0$, where $T_0=\{v\in T\colon  d_T(v)=0\}$. Thus, $H=F_8$. Let $k$ be the matching number of $H[T]$ and $l$ be the size of $\{v\in T_0\colon N_Y(v)\ne \emptyset\}$, we have $\eta(F_8)\le -k-l-2-\sum_{y\in H_0}x_y/x_{u^*}<-4$.

Suppose $H[T]$ contains exactly one $K_2$, say $v_1v_2$. Therefore, we claim $t\ge 3$. Otherwise, there is $v_i$ such that $d_H(v_i)\ge 3$. Take $v_i$ as the center of star, we have $K_{1,t'}(t'\ge 3>t)$ dominates $H$, which contradicts the maximality of $H[S,T]$. Note that $H[Y]$ contains no $K_2$. Since $H$ is $P_6$-free, we have
\begin{enumerate}[(i)]
    \item $|N_Y(v_1)\cap N_Y(v_2)|\le 1$;
    \item for any $y\in Y$, if $N(y)\cap (T\setminus\{v_1,v_2\})\ne \emptyset$ then $d_T(y)=1$;
    \item if $N(v_i)\cap Y\ne \emptyset(i=1,2)$ then $N(T\setminus\{v_1,v_2\})\cap Y=\emptyset$.
\end{enumerate}
Now, if there is $y\in Y$ such that $d_T(y)\ne 1$ then $H$ has the form of $F_4$. Otherwise $H$ has the form of $F_9$ or $F_{10}$ (see Figure \ref{fig-1}). One can easily check $\eta(H)<-3$ in all such cases.

{\bf Subcase 2.2.} $H[T]$ contains no $K_2$.

Assume $H[T]$ contains no $K_2$, then $H[Y]$ contains no $P_3$. Suppose that there is an edge $y_1y_2\in H[Y]$. We have $d_T(y_1)=d_T(y_2)=1$ and $N_T(y_1)=N_T(y_2)$. Assume that $y_i\sim v$ for some $v\in T$. We have $N(T\setminus\{v\})\cap Y=\emptyset$. Thus $H=F_{11}$ and $\eta(F_{11})<-4$ by similar calculations. Now consider that $H[Y]$ contains no edge. If there is $y\in Y$ such that $d_T(y)\ge 2$ then $N(T\setminus N(y))\cap Y=\emptyset$. Therefore, if $T=N(y)$ then the complete bipartite graph $H[S\cup\{y\},T]$ dominates $H$, which contradicts the maximality of $H[S,T]$; if $T\setminus N(y)\ne \emptyset$ then $N_Y(N(y))=\{y\}$ and thus $H=F_{12}$. One can verify $\eta(F_{12})<-4$. Now suppose $d_T(y)=1$ for all $y\in Y$. Then $H$ is a tree having the form of $F_{13}$ and thus $\eta(H)<-3$.

The proof is completed.
\end{proof}

We claim there exists a non-trivial component in $G^*[N(u^*)]$. Otherwise, 
if $N_+(u^*)=\emptyset$, then $G^*[N[u^*]]$ is a star. It yields that $0\le e(W) 
\le - 2\sum_{u\in N_0(u^*)}x_u/x_{u^*}+3$ from \eqref{eq4}. Therefore, $\lambda x_{u^*}=\sum_{u\in N_0(u^*)}x_u\le \frac{3}{2}x_{u^*}$, 
which implies $\lambda\le \frac{3}{2}<1+\sqrt{m-2}$ since $m\ge 3$, a contradiction. Hence, the claim holds. 

\begin{lemma}\label{lem-6}
For any non-trivial component $H$ in $G^*[N(u^*)]$, we have $H\ne K_5$ or $K_5-e$.
\end{lemma}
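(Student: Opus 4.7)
We argue by contradiction; suppose $H$ is isomorphic to $K_5$ or to $K_5-e$. In either case $V(H)\cup\{u^*\}$ contains $K_6$ up to a single missing edge, so every edge of $G^*[V(H)\cup\{u^*\}]$ lies on a $6$-cycle within that subgraph. The plan is to extract structural constraints from the $C_6^{\triangle}$-freeness of $G^*$ and combine them with the Perron identities encoded in~\eqref{eq4}.

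Using the cycle-richness of $V(H)\cup\{u^*\}$, I would first establish three consequences.
(i) Every $w\in W$ has $d_{V(H)}(w)\le 1$, since otherwise the $6$-cycle $w\,v_i\,v_k\,v_l\,v_m\,v_j\,w$ combined with the triangle $u^*v_iv_k$ would give $C_6^{\triangle}$. Set $W_1:=\{w\in W:d_{V(H)}(w)=1\}$ and $W_0:=W\setminus W_1$.
(ii) If $w_1w_2\in E(G^*[W])$ with $w_1,w_2\in W_1$, then their $V(H)$-neighbors coincide; otherwise the $6$-cycle $w_1\,v_i\,v_k\,v_l\,v_j\,w_2\,w_1$ with triangle $u^*v_iv_k$ would yield $C_6^{\triangle}$.
(iii) There do not exist $u\in N_0(u^*)$, $w\in W_0$, and $w'\in W_1$ with $u\sim w\sim w'$, because the $6$-cycle $u^*\,u\,w\,w'\,v_j\,v_i\,u^*$ and the triangle closer $v_k\in V(H)\setminus\{v_i,v_j\}$ would exhibit $C_6^{\triangle}$; in particular $W_1$ has no neighbor in $N_0(u^*)$. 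For $H=K_5-e$ each auxiliary $6$-cycle must be chosen to avoid the single missing edge, which is always feasible.

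Summing the Perron identity $\lambda x_v=x_{u^*}+(S-x_v)+\sum_{w\in N_W(v)}x_w$ over $v\in V(H)$ (using (i)) yields
\[(\lambda-4)\,S=5\,x_{u^*}+T,\qquad S:=\sum_{v\in V(H)}x_v,\;\;T:=\sum_{w\in W_1}x_w.\]
Substituting $\eta(K_5)=2S/x_{u^*}-10$ (or its analog for $K_5-e$) into~\eqref{eq4} and using $\sum_{u\in N_0(u^*)}x_u/x_{u^*}=\lambda-S/x_{u^*}$ from the Perron equation at $u^*$ (when $H$ is the unique nontrivial component; otherwise the forthcoming inequality only tightens), the nonnegativity $e(W)\ge 0$ becomes $T/x_{u^*}\ge (2\lambda^2-\lambda-48)/4$. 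Since $\lambda\ge 1+\sqrt{m-2}\ge 6$ for $m\ge 27$, this forces $|W_1|\ge 5$.

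Finally, I would combine these with the edge identity
\[m=|N(u^*)|+e(H)+e(W)+e(W_1,V(H))+e(W_0,N_0(u^*))\]
(using (iii) to drop the $W_1$-to-$N_0$ contribution) and the bound $e(W)\le\eta(K_5)+3\le 3$ from~\eqref{eq4}. For $\lambda>6.5$ (i.e., $m\ge 33$) the quadratic lower bound $T/x_{u^*}\ge(2\lambda^2-\lambda-48)/4$ outruns the linear upper bound $T/x_{u^*}\le 5\lambda-25$ implied by $S\le 5\,x_{u^*}$, giving an immediate contradiction. The main obstacle is the narrow range $m\in\{27,\ldots,32\}$; here a further consequence of (ii) and (iii)---namely that every $w\in W_0$ has at most one $W_1$-neighbor (else a $C_6^{\triangle}$ constructed through $w$ as in (ii) appears)---combined with pigeonhole on the $W$-neighbors of the $\ge 5$ vertices of $W_1$ should force more internal $W$-edges than $e(W)\le 3$ permits. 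The $K_5-e$ case follows by the same scheme with $\eta(K_5-e)\le -1$ replacing $\eta(K_5)\le 0$.
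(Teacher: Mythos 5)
Your quantitative approach diverges from the paper's and, as written, does not close. The paper's proof hinges on Lemma \ref{lem-x-3}: since $G^*$ has no cut vertex outside $u^*$, and since (as you also observe) every $w\in W$ sees at most one vertex of $H$ and adjacent such $w$'s see the \emph{same} vertex of $H$, any component of $G^*[W]$ meeting $H$ would attach to $H$ through a single vertex, which would then be a cut vertex. Hence $N_H(W)=\emptyset$, so $\lambda x_v=x_{u^*}+4x_v$ for every $v\in V(H)$, giving $x_v\le x_{u^*}/(\lambda-4)\le x_{u^*}/2$; then $\eta(K_5)=2S/x_{u^*}-10\le-5$ and \eqref{eq4} yields $e(W)\le-2<0$, a contradiction valid for all $m\ge 27$. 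You never invoke the cut-vertex lemma, and this is exactly where your argument leaks.

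Concretely: your two bounds $T/x_{u^*}\ge(2\lambda^2-\lambda-48)/4$ and $T/x_{u^*}\le 5\lambda-25$ only conflict when $\lambda>6.5$, i.e.\ when $m\ge 33$, and you acknowledge the residual range $27\le m\le 32$. The patch you sketch for that range is not established: (a) the pigeonhole on the ``$W$-neighbors of the $\ge 5$ vertices of $W_1$'' presupposes that every $w\in W_1$ has a neighbor inside $W$, which is false a priori (such a $w$ could be a pendant vertex hanging on $H$) and can only be rescued by the cut-vertex lemma you omit; (b) even granting it, five vertices each with at least one $W$-neighbor force only $e(W)\ge\lceil 5/2\rceil=3$, which is consistent with the bound $e(W)\le 3$ available at this stage, so no contradiction results; and (c) your parenthetical claim that the presence of other non-trivial components ``only tightens'' the inequality is unjustified --- for an extra component $H'=K_2$ the correction term $\sum_{u\in V(H')}d_{H'}(u)x_u/x_{u^*}-e(H')$ can be positive, which \emph{weakens} your lower bound on $S/x_{u^*}$ and hence on $T/x_{u^*}$. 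The structural route via Lemma \ref{lem-x-3} avoids all of these issues and handles $K_5$ and $K_5-e$ uniformly.
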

\begin{proof}
Suppose to the contrary that there is a component $H=K_5$ in $G^*[N(u^*)]$. For any component (not necessarily non-trivial) $H'$ in $G^*[N(u^*)]$, since $G^*$ is $C_6^\triangle$-free, we have $N_W(H)\cap N_W(H')=\emptyset$ and there is no path of length less than 3 between $N_W(H)$ and $N_W(H')$. Note that $e(W)\le \sum_H\eta(H)-2\sum_{u\in N_0(u^*)}x_u/x_{u^*}+3\le3$. For any $w_1,w_2 \in N_W(H)$ in a common path, we have $N_H(w_1)=N_H(w_2)$, since otherwise, there will exist a $C_6^\triangle$ in $G^*$. Let $V(H)=\{u_1,u_2,u_3,u_4,u_5\}$. If $N(W)\cap H=\emptyset$ then $x_{u_1}=x_{u_2}=x_{u_3}=x_{u_4}=x_{u_5}$. Now assume that $N(W)\cap H\ne \emptyset$, then $d_H(w)\le 1$ for any $w\in N_W(H)$ because $G^*$ is $C_6^\triangle$-free. From Lemma \ref{lem-x-3}, there exists no cut vertex in $V(G^*)\setminus \{u^*\}$ and thus $N_W(w)\ne \emptyset$ for $w\in N_W(H)$. It indicates that there exists a path $P$ between $N_W(H)$ and $N_W(H')$ for some component $H'$ in $G^*[N(u^*)]$. Otherwise, for any component $W'$ in $W$, if $N_H(W')\ne \emptyset$ then there exists $v\in H$ such that $N_H(W')=\{v\}$. Therefore, there will be a cut vertex in $H$ whenever $N_H(W)\ne \emptyset$, which contradicts that $u^*$ is the only possible cut vertex. By noticing that $e(W)\le3$, the length of $P$ is 3 and thus $d_W(w)=d_H(w)=0$ for any $w\in W\setminus V(P)$. Let $u_5$ be the vertex adjacent to one of the endpoints of $P$, then $N(u_1,u_2,u_3,u_4)\cap W=\emptyset$. Therefore, $x_{u_1}=x_{u_2}=x_{u_3}=x_{u_4}$ still holds. Hence, we always have $\lambda x_{u_1}=x_{u^*}+3x_{u_1}+x_{u_5}$. Thus $x_{u_1}\le \frac{2}{\lambda-3}x_{u^*}\le \frac{1}{2}x_{u^*}$ for $\lambda \ge 7$ and 
\begin{align*}
    e(W)&\le \sum_{H'}\eta(H')-2\sum_{u\in N_0(u^*)}x_u/x_{u^*}+3\\
    &=\eta(H)+ \sum_{H'\ne H}\eta(H')-2\sum_{u\in N_0(u^*)}x_u/x_{u^*}+3\\
    &\le 8 x_{u_1}/x_{u^*}+2x_{u_5}/x_{u^*}-10+\sum_{H'\ne H}\eta(H')+3\\
    &\le -1,
\end{align*}
a contradiction.
Similarly, we can prove that $H\ne K_5-e$ for any non-trivial component $H$ in $G^*[N(u^*)]$.
\end{proof}
By Lemmas \ref{lem-1}, \ref{lem-2} and \ref{lem-6}, we get 
\[e(W)\le \sum_H\eta(H)- 2\sum_{u\in N_0(u^*)}x_u/x_{u^*}+3<-1- 2\sum_{u\in N_0(u^*)}x_u/x_{u^*}+3\le 2.\]
Since $e(W)$ is an integer, we obtain $e(W)\le 1$.

\begin{lemma}\label{lem-3}
If $W\neq \emptyset$, then $e(W)=0$.
\end{lemma}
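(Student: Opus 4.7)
The plan is to argue by contradiction. Since we already know $e(W)\le 1$, it suffices to assume $W\ne\emptyset$ and $e(W)=1$ and derive a contradiction. Substituting $e(W)=1$ into inequality \eqref{eq4} yields
\[
\sum_H \eta(H)\;\ge\;-2+2\sum_{u\in N_0(u^*)}x_u/x_{u^*},
\]
where $H$ ranges over the non-trivial components of $G^*[N(u^*)]$; at least one such $H$ exists by the claim preceding Lemma \ref{lem-6}. Denote this inequality by $(\star)$.

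The first step is to classify the configurations compatible with $(\star)$. By Lemmas \ref{lem-1}, \ref{lem-2} and \ref{lem-6}, every non-trivial component $H$ satisfies $\eta(H)<-1$, with $\eta(H)\le -2$ unless $H=K_2$, and $\eta(H)=-2$ occurs only for $H\in\{K_4,K_5-2e\}$; moreover equality in this last bound forces $x_u=x_{u^*}$ for every $u\in V(H)$ of degree greater than $2$. Since each non-trivial component contributes strictly less than $-1$, two or more of them would give $\sum_H\eta(H)<-2$ and violate $(\star)$. Hence there is exactly one non-trivial component $H$, and the only surviving possibilities are: (i) $H=K_4$ with $N_0(u^*)=\emptyset$ and $x_u=x_{u^*}$ for all four vertices of $V(H)$; (ii) $H=K_5-2e$ with $N_0(u^*)=\emptyset$ and $x_u=x_{u^*}$ for every $u\in V(H)$ of degree greater than $2$; or (iii) $H=K_2=v_1v_2$ (with $N_0(u^*)$ possibly nonempty), in which case the explicit formula $\eta(K_2)=-1-(x_{v_1}+x_{v_2})/x_{u^*}$ turns $(\star)$ into
\[
x_{v_1}+x_{v_2}+2\sum_{u\in N_0(u^*)}x_u\;\le\;x_{u^*}.
\]

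The second step is to eliminate each case via the Perron eigenequation at $u^*$. In (i), $N(u^*)=V(K_4)$, so $\lambda x_{u^*}=\sum_{v\in V(K_4)}x_v=4x_{u^*}$ and $\lambda=4$. In (ii), $N(u^*)=V(K_5-2e)$ and at least four of the five Perron entries equal $x_{u^*}$, so $\lambda x_{u^*}\le 5x_{u^*}$ and $\lambda\le 5$. In (iii), $N(u^*)=\{v_1,v_2\}\cup N_0(u^*)$, hence
\[
\lambda x_{u^*}=x_{v_1}+x_{v_2}+\sum_{u\in N_0(u^*)}x_u\;\le\;x_{u^*}-\sum_{u\in N_0(u^*)}x_u\;\le\;x_{u^*},
\]
giving $\lambda\le 1$. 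Each estimate contradicts $\lambda\ge 1+\sqrt{m-2}\ge 6$, which holds because $m\ge 27$, and this contradiction completes the proof.

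The main technical hurdle lies in the classification step: one must carefully track strict versus non-strict inequalities, since Lemma \ref{lem-2} gives only $\eta(K_2)<-1$ (with the exact expression $\eta(K_2)=-1-(x_{v_1}+x_{v_2})/x_{u^*}$ being what drives case (iii)), while Lemma \ref{lem-1} attains $\eta(K_4)=\eta(K_5-2e)=-2$ only when specific Perron-entry equalities hold, and those same equalities are precisely what pin $\lambda$ down in cases (i) and (ii).
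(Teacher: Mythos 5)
Your proposal is correct and follows essentially the same route as the paper: assume $e(W)=1$, feed this into \eqref{eq4} to force a single non-trivial component $H\in\{K_2,K_4,K_5-2e\}$ via Lemmas \ref{lem-1}, \ref{lem-2} and \ref{lem-6}, and then contradict $\lambda\ge 1+\sqrt{m-2}\ge 6$ using the eigenvalue equation at $u^*$. Your treatment is in fact slightly sharper in places (e.g.\ tracking exactly which Perron entries of $K_5-2e$ are forced to equal $x_{u^*}$, and using $e(W)=1$ rather than $e(W)\ge 0$ in the $K_2$ case to get $\lambda\le 1$ instead of $\lambda\le 2$), but these are cosmetic differences.
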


\begin{proof}
Suppose to the contrary that $e(W)=1$. Then
\[\sum_H\eta(H)\ge e(W)+ 2\sum_{u\in N_0(u^*)}x_u/x_{u^*}-3 
\ge -2+2\sum_{u\in N_0(u^*)}x_u/x_{u^*}\ge -2.\]
By Lemmas \ref{lem-1}, \ref{lem-2}and \ref{lem-6}, there is exactly one non-trivial component $H$ in $G^*[N(u^*)]$ and $H=K_2$, $K_4$ or $K_5-2e$.
If $H=K_4$ or $K_5-2e$, then $-2 \ge \eta(H)\ge -2+ 2\sum_{u\in N_0(u^*)}x_u/x_{u^*}$. It yields that $N_0(u^*)=\emptyset$ and $G^*[N(u^*)]=H=K_4$ or $K_5-2e$. So 
$x_u=x_{u^*}$ for all $u\in V(H)$. Thus, $\lambda x_{u^*}=\sum_{u\in V(H)}x_u\le 5x_{u^*}$, which implies $\lambda=5<1+\sqrt{m-2}$ for any $m\ge 19$, a contradiction. Therefore, $H=K_2$ and 
\begin{align*}
0&\le e(W)\le \eta(H)-2\sum_{u\in N_0(u^*)}x_u/x_{u^*}+3\\
&\le -1-\sum_{u\in V(H)}x_u/x_{u^*}-2\sum_{u\in N_0(u^*)}x_u/x_{u^*}+3\\
&=2-\sum_{u\in V(H)}x_u/x_{u^*}-2\sum_{u\in N_0(u^*)}x_u/x_{u^*}.
\end{align*}
Thus,
\[
\lambda x_{u^*}= \sum_{u\in V(H)} x_u+\sum_{u\in N_0(u^*)}x_u\le \sum_{u\in V(H)} x_u+2\sum_{u\in N_0(u^*)}x_u\le 2x_{u^*}.
\]
It follows that $\lambda\le 2<1+\sqrt{m-2}$, a contradiction.
\end{proof}

\begin{lemma}\label{lem-4}
$G^*[N(u^*)]$ contains exactly one non-trivial component $H$ and $\delta(H)\ge 2$.
\end{lemma}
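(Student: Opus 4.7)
The plan is to combine the inequality
\[
\sum_H \eta(H) \ge -3 + 2 S_0
\]
coming from (\ref{eq4}) (with $S_0 := \sum_{u \in N_0(u^*)} x_u/x_{u^*}$; note that $e(W) = 0$ by Lemma \ref{lem-3} when $W \ne \emptyset$ and trivially when $W = \emptyset$) with the identity $\lambda = S_0 + S_+$, where $S_+ := \sum_{u \in N_+(u^*)} x_u/x_{u^*}$, coming from the eigen-equation at $u^*$. Since $m \ge 27$ we have $\lambda \ge 1 + \sqrt{25} = 6$, hence $S_+ + 2 S_0 = \lambda + S_0 \ge 6$; this inequality is the principal lever used throughout.

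For the first assertion (exactly one non-trivial component), suppose toward contradiction there are $k \ge 2$ non-trivial components, of which $j$ are copies of $K_2$ and the other $k - j$ are different (by Lemma \ref{lem-6}, none is $K_5$ or $K_5-e$). Each $K_2 = \{v_1,v_2\}$ contributes exactly $\eta(K_2) = -1 - (x_{v_1}+x_{v_2})/x_{u^*}$, while Lemmas \ref{lem-1} and \ref{lem-2} give $\eta(H) \le -2$ for any non-$K_2$ non-trivial component. Summing and comparing with $\sum \eta \ge -3 + 2 S_0$ yields
\[
\Sigma_{K_2} + 2 S_0 \le 3 - (2k - j),
\]
where $\Sigma_{K_2}$ collects $x_v/x_{u^*}$ over vertices in the $K_2$-components. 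For $j < k$ the right-hand side is at most zero (the single borderline $(k,j) = (2,1)$ gives a contradiction since $\Sigma_{K_2} > 0$); for $j = k$ we have $\Sigma_{K_2} = S_+$, so $\Sigma_{K_2} + 2 S_0 = \lambda + S_0 \ge 6$, far exceeding $3 - k \le 1$. Hence $k = 1$.

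For the second assertion ($\delta(H) \ge 2$), assume the unique non-trivial component $H$ has a pendent vertex; then $N_+(u^*) = V(H)$, so $S_+^H := \sum_{v \in V(H)} x_v/x_{u^*} = S_+$ and $S_+^H + S_0 \ge 6$. By the enumeration in Lemma \ref{lem-2}, either $H$ is one of $K_2$, $K_{1,t}$ with $t \ge 2$, or $F_3$, or else $\eta(H) \le -3 - \sum_{y \in Y} x_y/x_{u^*}$ for a nonempty pendent set $Y$; in the latter situation $\eta(H) < -3$ directly contradicts $\eta(H) \ge -3 + 2 S_0 \ge -3$. For each of the three residual structures, I would plug the sharp bound from the proof of Lemma \ref{lem-2} ($\eta(K_2) = -1 - S_+^H$; $\eta(K_{1,t}) \le -2 - \sum_i x_{v_i}/x_{u^*}$; $\eta(F_3) \le -2 - \sum_{y \in Y} x_y/x_{u^*}$) into $\eta(H) \ge -3 + 2 S_0$ and use $x_v/x_{u^*} \le 1$ to extract a linear upper bound $S_+^H \le 2 - 2 S_0$ (for $K_2$ and the stars) or $S_+^H \le 5 - 2 S_0$ (for $F_3$); together with $S_+^H + S_0 \ge 6$ this forces $S_0 \le -4$ or $S_0 \le -1$, contradicting $S_0 \ge 0$.

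The main obstacle is the bookkeeping in Step 2: one has to verify that the short list $\{K_2, K_{1,t}, F_3\}$ really exhausts every $\delta(H)=1$ configuration whose $\eta$-estimate from Lemma \ref{lem-2} is not already strictly below $-3$. For each of the remaining graphs $F_1, F_2, F_4, F_5, \ldots, F_{14}$ the bound carries a strictly negative slack (typically of the form $-\sum_{y \in Y} x_y/x_{u^*}$) coming from pendent vertices or missing $S$-edges, pushing $\eta(H) < -3$ and ruling out the case immediately. Once this bookkeeping is settled, the final contradictions reduce to one-line applications of $\lambda \ge 6$.
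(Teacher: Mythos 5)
Your proof is correct and follows essentially the same route as the paper: both rest on the inequality $\sum_H\eta(H)\ge -3+2\sum_{u\in N_0(u^*)}x_u/x_{u^*}$ obtained from (\ref{eq4}) with $e(W)=0$, the classification of $\eta(H)$ in Lemmas \ref{lem-1}, \ref{lem-2} and \ref{lem-6}, and the bound $\lambda\ge 6$ for $m\ge 27$. The only differences are organizational: you handle an arbitrary number $k$ of non-trivial components in one uniform count where the paper first reduces to two copies of $K_2$, and you explicitly dispose of the residual case $H=K_2$ in the pendant-vertex step, which the paper's own case list ($K_{1,s}$ with $s\ge 2$, or $F_3$) silently omits.
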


\begin{proof}
Lemma \ref{lem-3} gives $e(W)=0$, we have $W=N^2(u^*)$. From \eqref{eq4}, we have \[\sum_H\eta(H)\ge -3+2\sum_{u\in N_0(u^*)}x_u/x_{u^*}\ge -3.\]
It yields that $G^*[N(u^*)]$ contains at most two non-trivial components.

Suppose there are two non-trivial components $H_1,H_2$ in $G^*[N(u^*)]$. 
Then $H_1=H_2=K_2$ due to Lemmas \ref{lem-1} and \ref{lem-2}. We have
\begin{align*}
0&= e(W)\le \eta(H_1)+ \eta(H_2)-2\sum_{u\in N_0(u^*)}x_u/x_{u^*}+3\\
&< -2-\sum_{u\in V(H_1\cup H_2)}x_u/x_{u^*}-2\sum_{u\in N_0(u^*)}x_u/x_{u^*}+3\\
&=1-\sum_{u\in V(H_1\cup H_2)}x_u/x_{u^*}-2\sum_{u\in N_0(u^*)}x_u/x_{u^*}.
\end{align*}
Thus,
\[
\lambda x_{u^*}= \sum_{u\in V(H_1\cup H_2)} x_u+\sum_{u\in N_0(u^*)}x_u\le \sum_{u\in V(H_1\cup H_2)} x_u+2\sum_{u\in N_0(u^*)}x_u< x_{u^*}.
\]
It follows that $\lambda< 1<1+\sqrt{m-2}$, a contradiction. Therefore, there is exactly one non-trivial component $H$ in $G^*[N(u^*)]$.

Next, we show $\delta(H)\ge 2$. Otherwise, $H=K_{1,s}$ with $s\ge 2$, or $H\cong F_3$ due to Lemma \ref{lem-2}. Firstly we consider $H=K_{1,s}$. Let $u$ be the center of $H$ and $\{v_1,v_2,\ldots,v_s\}=H\setminus \{u\}$.
 We have
\begin{align*}
0&= e(W)\le \eta(H)-2\sum_{u\in N_0(u^*)}x_u/x_{u^*}+3\\
&\le -2-\sum_{i=1}^s x_{v_i}/x_{u^*}-2\sum_{u\in N_0(u^*)}x_u/x_{u^*}+3\\
&=1-\sum_{i=1}^s x_{v_i}/x_{u^*}-2\sum_{u\in N_0(u^*)}x_u/x_{u^*}.
\end{align*}
Thus,
\[
\lambda x_{u^*}= x_u+\sum_{i=1}^s x_{v_i}+\sum_{u\in N_0(u^*)}x_u\le x_{u^*}+\sum_{i=1}^s x_{v_i}+2\sum_{u\in N_0(u^*)}x_u\le x_{u^*}+x_{u^*}=2x_{u^*}.
\]
It follows that $\lambda\le 2<1+\sqrt{m-2}$, a contradiction. Now consider the case of $H=F_3$. Let $V(H)=\{u_1,u_2,u_3,u_4\}\cup \{v_1,v_2,\ldots,v_k\}$, where $d_H(u_1)=k+3$ and $H[u_1,u_2,u_3,u_4]=K_4$. Then we have,
\begin{align*}
0&\le e(W)\le \eta(H)-2\sum_{u\in N_0(u^*)}x_u/x_{u^*}+3\\
&\le -2-\sum_{i=1}^kx_{v_i}/x_{u^*}-2\sum_{u\in N_0(u^*)}x_u/x_{u^*}+3\\
&=1-\sum_{i=1}^kx_{v_i}/x_{u^*}-2\sum_{u\in N_0(u^*)}x_u/x_{u^*}.
\end{align*}
Thus,
\begin{align*}
\lambda x_{u^*}&= x_{u_1}+x_{u_2}+x_{u_3}+x_{u_4}+\sum_{i=1}^k x_{v_i}+\sum_{u\in N_0(u^*)}x_u\\
&\le 4x_{u^*}+\sum_{i=1}^k x_{v_i}+2\sum_{u\in N_0(u^*)}x_u \le 4x_{u^*}+x_{u^*}=5x_{u^*}.
\end{align*}
It follows that $\lambda=5<1+\sqrt{m-2}$ since $m\ge 27$, a contradiction.

The proof is completed.
\end{proof}

By Lemma \ref{lem-4}, we assume that 
$H$ is the unique non-trivial component in $G^*[N(u^*)]$. 

\begin{lemma}\label{lem-8} 
$\eta (H)=-3$, $N_0(u^*)= \emptyset$ and 
$G^*[N(u^*)]=K_{2}\nabla (h-2)K_1$, where $h=|N(u^*)|>5$.
\end{lemma}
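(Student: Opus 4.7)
The plan is to combine inequality \eqref{eq4} with the structural facts already established. Since $e(W)=0$ by Lemma \ref{lem-3} and $G^*[N(u^*)]$ contains a unique non-trivial component $H$ with $\delta(H)\ge 2$ by Lemma \ref{lem-4}, inequality \eqref{eq4} collapses to
\[
  \eta(H)\ge -3+2s_0,\qquad s_0:=\sum_{u\in N_0(u^*)}x_u/x_{u^*}\ge 0,
\]
so in particular $\eta(H)\ge -3$. Coupled with the upper bounds from Lemma \ref{lem-1} and the exclusion $H\notin\{K_5,K_5-e\}$ from Lemma \ref{lem-6}, only a short list of possibilities for $H$ survives: (a) $H=K_4$ or $K_5-2e$, both with $\eta(H)\le -2$; (b) $H=K_3$, or $|V(H)|=5$ with $H\notin\{K_5,K_5-e,K_5-2e\}$, both with $\eta(H)\le -3$; (c) $h:=|V(H)|\ge 6$.

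First I would eliminate case (a). Here $-2\ge \eta(H)\ge -3+2s_0$ forces $s_0\le 1/2$, and the Perron equation at $u^*$ gives
\[
  \lambda=\sum_{u\in N(u^*)}x_u/x_{u^*}\le |V(H)|+s_0\le 5.5,
\]
contradicting $\lambda\ge 1+\sqrt{m-2}\ge 6$ for $m\ge 27$. For case (b), the two-sided bound $-3+2s_0\le \eta(H)\le -3$ forces $\eta(H)=-3$ and $s_0=0$, so $N_0(u^*)=\emptyset$ and $|N(u^*)|=h\le 5$. Then $\lambda\le h\le 5<6\le 1+\sqrt{m-2}$, again a contradiction.

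Only case (c) remains. Since $H$ is a connected $P_6$-free graph with $\delta(H)\ge 2$ and $h\ge 6$, Lemma \ref{lem-x-5} yields $e(H)\le 2h-3$ with equality only at $H=K_2\nabla (h-2)K_1$. Using $\delta(H)\ge 2$ and $x_u\le x_{u^*}$, the crude estimate $\eta(H)\le e(H)-2h$ applies, giving $\eta(H)\le -3$, strictly less than $-3$ unless $H=K_2\nabla(h-2)K_1$ (in which nonextremal situation $e(H)\le 2h-4$). Combined with $\eta(H)\ge -3$, this pins down $H=K_2\nabla(h-2)K_1$ and $\eta(H)=-3$, whence $s_0=0$ and $N_0(u^*)=\emptyset$, so $|N(u^*)|=h\ge 6>5$. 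The main subtlety I expect is the sharp bound $s_0\le 1/2$ used in case (a): it relies on the full strength of Lemma \ref{lem-1} (the bound $\eta(H)\le -2$ rather than $\le -1$ for $K_4$ and $K_5-2e$), and without it the contradiction with $\lambda\ge 6$ would not be automatic.
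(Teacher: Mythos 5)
Your proof is correct and follows essentially the same route as the paper's: the two\,-sided squeeze $-3+2s_0\le\eta(H)\le -3$ combined with Lemma \ref{lem-x-5} to force $H=K_2\nabla(h-2)K_1$, with your explicit elimination of $K_4$ and $K_5-2e$ via $s_0\le 1/2$ and $\lambda\le 5.5<1+\sqrt{m-2}$ actually spelling out a step the paper compresses into ``since $h>5$''. The only cosmetic slip is that your case (b) should also list $|V(H)|=4$ with $H\ne K_4$ (e.g.\ $C_4$ or $K_4-e$), which falls under the ``otherwise'' clause of Lemma \ref{lem-1} and is handled verbatim by your case-(b) argument.
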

\begin{proof}
We firstly claim that $|N(u^*)|>5$. Otherwise, $\lambda x_{u^*}=\sum_{u\in N(u^*)}x_u\le 5x_{u^*}$. Therefore, $\lambda \le 5<1+\sqrt{m-2}$, a contradiction. From Lemma \ref{lem-4}, we obtain that there exists exactly one non-trivial component $H$ in $G[N(u^*)]$. It follows that
\begin{align}\label{eq5}
   \eta(H)\ge -3+\sum_{u\in N_0(u^*)}x_u/x_{u^*}\ge -3.
\end{align}
Lemma \ref{lem-1} indicated that $\eta(H)=-3$ unless $H=K_4$ or $K_5-2e$. Observe that $K_4 = F_3$, with $H_0 =\emptyset$. By a similar discussion as in Lemma \ref{lem-4}, we have $H \ne K_4$. Using the same method, we can prove $H\ne K_5-2e$. Hence the equality of (\ref{eq5}) occurs. This implies that $N_0(u^*)=\emptyset$. From the proof of Lemma \ref{lem-1}, we obtain that $H=K_2\nabla (h-2)K_1$ since $h> 5$. Therefore, we conclude that $G^*[N(u^*)]=H=K_2\nabla (h-2)K_1$.
\end{proof}

\begin{proof}[\bf Proof of Theorem \ref{thm-1}] 
According to Lemma \ref{lem-4}, we know that $G^*[N(u^*)]$ contains exactly one non-trivial component $H$.  Lemma \ref{lem-8} implies $\eta(H)=-3$ and $N_0(u^*)=\emptyset$ and $G^*[N(u^*)]=K_2\nabla (h-2)K_1$. Therefore, in order to prove Theorem \ref{thm-1}, it suffices to show that $W=\emptyset$. Suppose to the contrary that $W\ne \emptyset$. 
Note that Lemma \ref{lem-3} gives $e(W)=0$.   
In addition, we observe that \eqref{eq4} turns to be $e(W)\le \eta(H)+3=0$, and thus the equality of \eqref{eq4} holds. 
For any $w\in W$, we have $d_{N(u^*)} (w)\ge 1$ 
since $G^*$ is connected. 
The equality case of (\ref{eq4}) implies $x_w=x_{u^*}$ for $w\in W$. Let $u_1,u_2$ be the vertices with degree greater than $2$ in $G^*[N(u^*)]$, then $x_{u_i}=x_{u^*}$ since the equality of \eqref{eq'} holds. Note that \[\lambda x_{u^*}=x_{u_1}+x_{u_2}+\sum_{v\in N(u^*)\setminus \{u_1,u_2\}}x_v=2x_{u^*}+\sum_{v\in N(u^*)\setminus \{u_1,u_2\}}x_v=\lambda x_{u_i}-\sum_{w\in N_W(u_i)}x_w.\] We have $N(w)\cap \{u_1,u_2\}=\emptyset$, i.e., $N(w)\subsetneq N(u^*)$. Thus
\[\lambda x_w=\sum_{v\in N(w)}x_v<\sum_{v\in N(u^*)}x_v=\lambda x_{u^*}, \]
a contradiction. This completes the proof of Theorem \ref{thm-1}.
\end{proof}

\section*{Declaration of competing interest}
There is no competing interest.

\section*{Acknowledgments}
Lu Lu was supported by NSFC (No. 12001544) and  Natural Science Foundation of Hunan Province (No. 2021JJ40707). 
Yongtao Li is a student under the supervision of Prof. Yuejian Peng, and his work was supported by  NSFC (No. 11931002).  The authors are so grateful to the referees for their valuable comments and corrections which improve the presentation of the paper.

{\small

}


\begin{thebibliography}{xx}
\bibitem{Balister} P. N. Balister, E. Győri, J. Lehel, R. H. Schelp, Connected graphs without long paths, Discrete Math. 308 (2008) 4487--4494.

\bibitem{Brualdi} R. A. Brualdi, A. J. Hoffman, On the spectral radius of $(0, 1)$ matrices, Linear Algebra Appl. 65 (1985) 133--146.


\bibitem{FYH2022}
X. Fang, L. You, Y. Huang, 
The maximum spectral radius of graphs of given size with
forbidden subgraph, arXiv: 2207.03045v1, 7 July 2022. 

\bibitem{LSW2022}
S. Li, W. Sun, W. Wei, Forbidden subgraphs, bounded spectral radii, and size of graphs,
arXiv: 2206.09295v1, 18 June 2022.

\bibitem{LP2022}
Y. Li, Y. Peng, 
The maximum spectral radius of non-bipartite graphs for bidding short
odd cycles, arXiv: 2204.09884v3, 26 April 2022. 

\bibitem{Liu} J. Liu, Y. Peng, C. Zhao, Characterization of $P_6$-free graphs, Discrete Appl. Math. 155 (2007) 1038--1043.
\bibitem{LLH2022}
Z. Lou, L. Lu, X. Huang, 
Spectral radius of graphs with given size and odd girth, arXiv: 2207.12689v2, 30 July 2022.

\bibitem{MLH2021}
G. Min, Z. Lou, Q. Huang, 
A sharp upper bound on the spectral radius of $C_5$-free/$C_6$-free
graphs with given size, Linear Algebra Appl. 640 (2022) 162--178.

\bibitem{Niki2002cpc} 
V. Nikiforov, 
Some inequalities for the largest eigenvalue of a graph, 
Combin. Probab. Comput. 11 (2002) 179--189. 

\bibitem{Niki2009jctb}
V. Nikiforov, More spectral bounds on the clique and independence numbers, J. Combin.
Theory Ser. B 99 (2009) 819--826.

\bibitem{Niki2009laa} 
V. Nikiforov, 
The maximum spectral radius of $C_4$-free graphs of given order and size, 
Linear Algebra Appl. 430 (2009) 2898--2905.  


\bibitem{Nik2021}
V. Nikiforov, On a theorem of Nosal, arXiv: 2104.12171, 25 April 2021.


\bibitem{Wang2022DM}
Z. Wang, Generalizing theorems of Nosal and Nikiforov: Triangles and quadrilaterals, 
Discrete Math. 345 (2022) 112973.  


\bibitem{ZLS2021} 
M. Zhai, H. Lin, J. Shu, Spectral extrema of graphs with fixed size: Cycles and complete bipartite graphs, European J. Combin. 95 (2021) 103322.

\bibitem{ZS2021}
M. Zhai, J. Shu, A spectral version of Mantel’s theorem, 
Discrete Math. 345 (2022) 112630. 
\end{thebibliography}
\end{document}